\newtheorem{theorem}{Theorem}[section]
\newtheorem{corollary}[theorem]{Corollary}
\newtheorem{lemma}[theorem]{Lemma}
\newtheorem{proposition}[theorem]{Proposition}
\theoremstyle{definition}
\newtheorem{remark}[theorem]{Remark}
\newcommand{\Z}{\mathds{Z}}
\newcommand{\R}{\mathds{R}}
\newcommand{\id}{\mathit{id}}
\newenvironment{romanlist}
	{\begin{enumerate}
	}
	{\end{enumerate}}
\DeclareSymbolFont{EulerScript}{U}{eus}{m}{n}
\DeclareSymbolFontAlphabet\mathscr{EulerScript}
\begin{document}

\title{A Burau-Alexander 2-functor on tangles}

\author{David Cimasoni}
\address{Universit\'e de Gen\`eve, Section de math\'ematiques, 2 rue du Li\`evre, 1211 Gen\`eve, Switzerland}
\email{david.cimasoni@unige.ch}
\author{Anthony Conway }
\address{Universit\'e de Gen\`eve, Section de math\'ematiques, 2 rue du Li\`evre, 1211 Gen\`eve, Switzerland}
\email{anthony.conway@unige.ch}

\subjclass[2000]{Primary 57M25; Secondary 18D05}
\keywords{tangles, cospans, bicategory,~$2$-functor, Burau representation}

\begin{abstract}

We construct a weak~$2$-functor from the bicategory of oriented tangles to a bicategory of Lagrangian cospans.
This functor simultaneously extends the Burau representation of the braid groups, its generalization to tangles due to Turaev and the first-named author,
and the Alexander module of~$1$ and~$2$-dimensional links.

\end{abstract}
\maketitle

%%%%%%%%%%%%%%%%%%%%%%%%%%

\section{Introduction}

Since its introduction in 1935, the Burau representation~\cite{Bur} has been one of the most studied representations of the braid groups. In its reduced version,
it takes the form of a homomorphism~$\rho_n\colon B_n\to\mathit{GL}_{n-1}(\Lambda)$ with~$\Lambda=\Z[t^{\pm 1}]$, which preserves some non-degenerate skew-hermitian form
on~$\Lambda^{n-1}$~\cite{Squier}. The construction of~$\rho_n$, whether the algebraic one~\cite{Bur,Bir} or the homological one~\cite{KLW},
easily extend to {\em oriented braids\/}, i.e. braids where different strands can be oriented in different directions. In a slightly pedantic style, one can therefore say that
the Burau representations constitute a functor~$\rho$ from the groupoid~$\mathbf{Braids}$, with objects finite sequences of signs~$\pm 1$ and morphisms oriented braids, to the 
groupoid~$\mathbf{U}_\Lambda$, with objects~$\Lambda$-modules equipped with a non-degenerate skew-hermitian form and morphisms unitary~$\Lambda$-isomorphisms.

In this context, it is natural to ask whether this Burau functor extends to the category~$\mathbf{Tangles}$ of oriented tangles (whose formal definition can be found in subsection~\ref{sub:tangles}). Such an extension was constructed by Turaev and the first-named author in~\cite{CT}. In a nutshell, they defined a category~$\mathbf{Lagr}_\Lambda$ of {\em Lagrangian
relations\/} in which~$\mathbf{U}_\Lambda$ embeds via the graph functor: if~$f$ is a unitary isomorphism, then its graph~$\Gamma_f$ is a Lagrangian relation. Then, they constructed
a functor~$\mathcal{F}\colon\mathbf{Tangles}\to\mathbf{Lagr}_\Lambda$ such that~$\mathcal{F}(\beta)=\Gamma_{\rho(\beta)}$ for any oriented braid~$\beta$.

Note that the groupoid~$\mathbf{Braids}$ is nothing but the {\em core\/} of~$\mathbf{Tangles}$: both categories have the same objects, and the isomorphisms of the latter are
the morphisms of the former. Therefore, one might wonder if there is an extension~$\mathcal{B}$ of~$\rho$ to oriented tangles taking values in a category whose core is (equivalent
to)~$\mathbf{U}_\Lambda$. More importantly, oriented surfaces between oriented tangles turn~$\mathbf{Tangles}$ into a (weak)~$2$-category
(see subsection~\ref{sub:2mor} for a discussion of this fact), and several functors, such as the one coming from Khovanov homology~\cite{Kho}, have been shown to extend to~$2$-functors. Hence, one can also hope to extend~$\mathcal{B}$ to a~$2$-functor. This is what we achieve in the present paper, building on the homological definition of the Burau representation.

The idea is to consider {\em cospans\/}~\cite{Ben,Gra} of~$\Lambda$-modules, i.e. diagrams of the form~$H\to T\leftarrow H'$.
More precisely, we start by defining the category~$\mathbf{L}_\Lambda$ of {\em Lagrangian cospans\/}, whose core is shown to be equivalent
to~$\mathbf{U}_\Lambda$. This category should be understood as a generalization of the category of Lagrangian relations, in the sense that there is a full (non-faithful)
functor~$F\colon\mathbf{L}_\Lambda\to\mathbf{Lagr}_\Lambda$ which is the identity on objects. The functor~$\mathcal{F}$ then lifts
in a very natural way to a functor~$\mathcal{B}$ taking values in~$\mathbf{L}_\Lambda$. In summary, we have the commutative diagram of functors
\[
\xymatrix@R0.5cm{ 
 & \mathbf{L}_\Lambda \ar[d]^F \\
\mathbf{Tangles} \ar[ur]^{\mathcal{B}} \ar[r]_{\mathcal{F}} & \mathbf{Lagr}_\Lambda\,,
}
\]
where~$\mathcal{B}$ extends the Burau functor~$\rho$ in the following sense: the restriction of~$\mathcal{B}$ to~$\mathbf{Braids}=\mathit{core}(\mathbf{Tangles})$ fits in the
commutative diagram
\[
\xymatrix@R0.5cm{ 
 & \mathit{core}(\mathbf{L}_\Lambda) \ar[d]_\simeq ^{F|} \\
\mathbf{Braids} \ar[ur]^{\mathcal{B}|} \ar[r]_{\rho} & \mathbf{U}_\Lambda\,,
}
\]
with the vertical arrow an equivalence of categories.

Furthermore, the category~$\mathbf{L}_\Lambda$ can be modified in a natural way and endowed with a weak~$2$-category structure
yielding a {\em bicategory\/}, and~$\mathcal{B}$ extends to a weak~$2$-functor on the bicategory of oriented tangles and surfaces. Finally, when restricted to~$1$ and~$2$-endomorphisms of the empty set, i.e.
oriented links and closed surfaces,~$\mathcal{B}$ is nothing but the Alexander module.

\medskip

The paper is organized as follows. In Section~\ref{sec:Lagr}, we recall the definition of the category~$\mathbf{Lagr}_\Lambda$ of Lagrangian relations,
we define our category~$\mathbf{L}_\Lambda$ of Lagrangian cospans together with the full functor~$F\colon\mathbf{L}_\Lambda\to\mathbf{Lagr}_\Lambda$, and prove that
the core of~$\mathbf{L}_\Lambda$ is equivalent to~$\mathbf{U}_\Lambda$. In Section~\ref{sec:2cat}, we show that~$\mathbf{L}_\Lambda$ naturally
extends to a bicategory. In Section~\ref{sec:2functor}, we give the definition of the category of oriented tangles, discuss its~$2$-category extension,
construct the functor~$\mathcal{B}$ and show that it extends to a weak~$2$-functor.
Finally, in Section~\ref{sec:versions}, we briefly explain how other versions of the Burau representation
(namely, unreduced and multivariable versions) can be extended to weak~$2$-functors using the same ideas.

%%%%%

\section*{Acknowledgments}
The authors wish to thank Louis-Hadrien Robert for useful discussions, and the anonymous referee for pointing out a mistake in an earlier version of this article.
The first author was supported by the Swiss National Science Foundation. The second author was supported by the NCCR SwissMAP, funded by the Swiss National Science Foundation.

%%%%%%%%%%%%%%%%%%%%%%%%%%

\section{Lagrangian categories}
\label{sec:Lagr}

The aim of this section is to introduce the various algebraic categories that appear in our construction. In a first subsection, we briefly recall the definition of the
category~$\mathbf{Lagr}_\Lambda$ of Lagrangian relations over a ring~$\Lambda$ and explain why it should be understood as a generalization of the groupoid~$\mathbf{U}_\Lambda$ of unitary automorphisms of Hermitian~$\Lambda$-modules, following~\cite{CT}. In subsection~\ref{sub:pushout}, we recall the theory of cospans in a category
with pushouts. In subsection~\ref{sub:cospan}, we define the category~$\mathbf{L}_\Lambda$ of Lagrangian cospans, and relate it to the category of Lagrangian relations via a full functor~$F\colon\mathbf{L}_\Lambda\to\mathbf{Lagr}_\Lambda$. In subsection~\ref{sub:core}, we show that this functor restricts to an equivalence of categories between the core groupoid of~$\mathbf{L}_\Lambda$ and~$\mathbf{U}_\Lambda$.

%%%%%

\subsection{The category of Lagrangian relations}
\label{sub:Lagr}

Fix an integral domain~$\Lambda$ endowed with a ring involution~$\lambda \mapsto \overline{\lambda}$. A {\em skew-Hermitian form\/} on a~$\Lambda$-module~$H$ is a map~$\omega\colon H \times H\rightarrow\Lambda$ such that for
all~$x,y,z \in H$ and all~$\lambda,\lambda'\in\Lambda$,
\begin{romanlist}
\item{$\omega(\lambda x + \lambda' y,z)=\lambda \omega(x,z)+\lambda' \omega(y,z)$,}
\item{$\omega(x,y)=-\overline{\omega(y,x)}$,}
\item{if~$\omega(x,y)=0$ for all~$y \in H$, then~$x=0$.}
\end{romanlist}
A {\em Hermitian~$\Lambda$-module\/}~$H$ is a finitely generated~$\Lambda$-module endowed with a skew-Hermitian form~$\omega$.  The same module~$H$ with the opposite form~$-\omega$ will be denoted by~$-H$. The {\em annihilator\/} of a submodule~$A \subset H$ is the submodule$
\mathrm{Ann}(A)=\lbrace x \in H \ | \ \omega(v,x)=0 \text{ for all } v \in A \rbrace\,.$
A submodule is called {\em Lagrangian\/} if it is equal to its annihilator. Given a submodule~$A$ of a Hermitian~$\Lambda$-module~$H$, set
\[
\overline{A}= \lbrace x \in H \ | \ \lambda x \in A \ \text{for a non-zero} \ \lambda \in \Lambda \rbrace\,.
\]
Observe that if~$A$ is Lagrangian, then~$\overline{A}=A$.

If~$H$ and~$H'$ are Hermitian~$\Lambda$-modules, a {\em Lagrangian relation\/} from~$H$ to~$H'$ is a Lagrangian submodule of~$(-H)\oplus H'$. For instance, given a
Hermitian~$\Lambda$-module~$H$, the {\em diagonal relation\/}
\[
\Delta_H= \lbrace h \oplus h \in H \oplus H \ | \  h \in H \rbrace
\]
is a Lagrangian relation from~$H$ to~$H$. Given two Lagrangian relations~$N_1$ from~$H$ to~$H'$ and~$N_2$ from~$H'$ to~$H''$, their {\em composition\/} is defined
as~$N_2 \circ N_1 := \overline{N_2N_1}\subset (-H) \oplus H''$, where
\[
N_2N_1 = \lbrace x \oplus z \ | \ x \oplus y \in N_1 \ \text{and} \ y \oplus z \in N_2 \text{ for some~$y\in H'$}\rbrace\,.
\]
The proof of the next theorem can be found in~\cite[Theorem 2.7]{CT}.

\begin{theorem}
\label{thm:Lagr}
Hermitian~$\Lambda$-modules, as objects, and Lagrangian relations, as morphisms, form a category. \qed
\end{theorem}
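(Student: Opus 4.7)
To establish that we have a category, I would verify the three standard axioms in turn: that composition of two Lagrangian relations is again a Lagrangian relation, that composition is associative, and that the diagonals $\Delta_H$ serve as identity morphisms. Throughout, the Hermitian form on $(-H)\oplus H'$ used to speak of Lagrangians will be taken as the orthogonal sum of $-\omega_H$ and $\omega_{H'}$, so the non-degeneracy condition (iii) is automatic on the direct sums.

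The crux is the well-definedness of composition. Given Lagrangian relations $N_1\subset(-H)\oplus H'$ and $N_2\subset(-H')\oplus H''$, I would first check by direct computation that the raw product $N_2 N_1$ is isotropic in $(-H)\oplus H''$: if $x\oplus z,\, x'\oplus z'\in N_2 N_1$ are witnessed by intermediate elements $y,y'\in H'$, then expanding $\omega((x\oplus z),(x'\oplus z'))=-\omega_H(x,x')+\omega_{H''}(z,z')$ and adding and subtracting $-\omega_{H'}(y,y')$ gives zero by the isotropy of $N_1$ and $N_2$. Isotropy is preserved under the saturation operation $A\mapsto\overline{A}$ because the form is $\Lambda$-bilinear and $\Lambda$ is an integral domain; hence $\overline{N_2 N_1}$ is isotropic. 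The main obstacle is to improve this to the Lagrangian property, i.e. to show $\mathrm{Ann}(\overline{N_2 N_1})\subset\overline{N_2 N_1}$. I would attack this by taking an element $w\in\mathrm{Ann}(\overline{N_2 N_1})$, using the fact that $N_1$ and $N_2$ are each self-annihilating to produce an auxiliary element in $H'$ witnessing $w\in N_2 N_1$ up to torsion, and then concluding via the very definition of the closure $\overline{\cdot}$. This last step is precisely why the closure is needed: $N_2 N_1$ alone is generally only isotropic, and the saturation by the multiplicative subset $\Lambda\setminus\{0\}$ captures all vectors in the annihilator.

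For associativity I would show $(N_3\circ N_2)\circ N_1 = \overline{N_3 N_2 N_1} = N_3\circ(N_2\circ N_1)$, where the middle term denotes the obvious three-fold product set. The set-theoretic identity $(N_3 N_2)N_1 = N_3 N_2 N_1 = N_3(N_2 N_1)$ is immediate, and the delicate point is to check that an outer closure absorbs an inner one, i.e. $\overline{\overline{N_3 N_2}N_1}=\overline{N_3 N_2 N_1}$ and symmetrically. This follows from a small diagram chase: any element whose $\Lambda$-multiple lies in $\overline{N_3 N_2}N_1$ admits, after multiplying by a second nonzero scalar, a representation in $N_3 N_2 N_1$, again relying on $\Lambda$ being an integral domain to combine the two torsion scalars into a single nonzero one.

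For identities, $\Delta_H$ is manifestly Lagrangian since the form on $(-H)\oplus H$ vanishes on the diagonal and any vector orthogonal to the diagonal must lie on it by non-degeneracy of $\omega_H$. The equalities $\Delta_{H'}\circ N = N$ and $N\circ \Delta_H = N$ for a Lagrangian relation $N\subset(-H)\oplus H'$ are easily checked at the level of the raw products, and since $N$ is already closed ($\overline{N}=N$ as it is Lagrangian), the closure in the definition of composition does not alter the result. I expect the main obstacle of the entire argument to be the Lagrangian property of $\overline{N_2 N_1}$ discussed above; everything else is a reasonably mechanical verification once the role of the saturation $\overline{\cdot}$ is understood.
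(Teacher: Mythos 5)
Your peripheral verifications are fine: the isotropy of the raw product $N_2N_1$, the fact that saturation preserves isotropy, the absorption of an inner closure by an outer one in the associativity argument, and the treatment of the diagonals $\Delta_H$ (using $\overline{N}=N$ for Lagrangian $N$) are all correct and essentially routine. Note, however, that the paper itself does not prove this theorem; it defers to~\cite[Theorem~2.7]{CT}, so the relevant comparison is with the argument given there.

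The genuine gap is at the step you yourself single out as the crux: showing $\mathrm{Ann}(\overline{N_2N_1})\subset\overline{N_2N_1}$. Your proposal for this step --- ``use the fact that $N_1$ and $N_2$ are self-annihilating to produce an auxiliary element in $H'$ witnessing $w\in N_2N_1$ up to torsion'' --- is a statement of intent, not an argument, and it is not clear that it can be carried out directly inside the modules. Given $x\oplus z$ annihilating $N_2N_1$, there is no evident way to manufacture the intermediate witness $y\in H'$ from self-annihilation alone: the tools one would reach for (maximality via rank counts, or identities such as $\mathrm{Ann}(A\cap B)=\mathrm{Ann}(A)+\mathrm{Ann}(B)$) are valid for non-degenerate forms on finite-dimensional vector spaces but fail for modules over an integral domain $\Lambda$. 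The proof in~\cite{CT} handles exactly this point by passing to the quotient field $Q$: one checks that a submodule is Lagrangian iff it is saturated and becomes Lagrangian after tensoring with $Q$; over $Q$ the composition of Lagrangian subspaces is Lagrangian by finite-dimensional skew-Hermitian linear algebra (a dimension-count or symplectic-reduction argument, which is where the witness $y$ actually comes from); and since the $Q$-span of $N_2N_1$ coincides with $(N_2\otimes Q)(N_1\otimes Q)$, any element of the annihilator has a nonzero $\Lambda$-multiple in $N_2N_1$, i.e.\ lies in $\overline{N_2N_1}$. This detour through $Q$ (or an equivalent maximality input) is precisely what the saturation $\overline{\phantom{A}}$ is designed to mediate, and it is the missing idea in your sketch; without it the central step of your proof does not go through.
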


Following~\cite{CT}, we shall denote this category by~$\mathbf{Lagr}_\Lambda$ and call it the {\em category of Lagrangian relations over~$\Lambda$\/}.

We shall say that a~$\Lambda$-linear map between two Hermitian~$\Lambda$-modules is {\em unitary\/} if it preserves the corresponding skew-Hermitian forms.
Let us now briefly recall why Lagrangian relations can be understood as a generalization of unitary~$\Lambda$-isomorphisms and unitary~$Q$-isomorphisms, where~$Q=Q(\Lambda)$ is the field of fractions of~$\Lambda$. Let~$\mathbf{U}_\Lambda$ be the category of Hermitian~$\Lambda$-modules and unitary~$\Lambda$-isomorphisms. Also, let~$\mathbf{U}_\Lambda^0$ be the category of Hermitian~$\Lambda$-modules, where the morphisms between~$H$ and~$H'$ are the unitary~$Q$-isomorphisms between~$H\otimes Q$ and~$H' \otimes Q$.
The {\em graph\/} of a~$\Lambda$-linear map~$f \colon H \rightarrow H'$ is the submodule~$\Gamma_f=\lbrace x \oplus f(x) \rbrace$ of~$H \oplus H'$.  Similarly the
{\em restricted graph\/} of a~$Q$-linear map~$\varphi \colon H \otimes Q \rightarrow H' \otimes Q$ is~$\Gamma_\varphi^0=\Gamma_\varphi \cap (H \oplus H').$ The proof of the following theorem can be found in \cite[Theorem~$2.9$]{CT}.

\begin{theorem}
\label{thm:graph}
The maps~$f \mapsto f\otimes \mathit{id}_Q$,~$f \mapsto \Gamma_f$ and~$\varphi \mapsto \Gamma^0_\varphi$ define faithful functors which are the identity on objects, and
fit in the commutative diagram
\[
\xymatrix{\mathbf{U}_\Lambda \ar[r]^{- \otimes Q} \ar@/_1pc/[rr]_\Gamma & \mathbf{U}_\Lambda^0 \ar[r]^{\Gamma^0}& \mathbf{Lagr}_\Lambda\,.}
\]\qed
\end{theorem}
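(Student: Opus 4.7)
The plan is to verify, for each of the three maps, that it sends morphisms to morphisms, is functorial, and is faithful, and then to check commutativity of the triangle. Well-definedness of $f \mapsto f \otimes \mathit{id}_Q$ is immediate: the skew-Hermitian form extends sesquilinearly to $H \otimes Q$, and unitarity transfers. For $f \mapsto \Gamma_f$, isotropy of $\Gamma_f$ in $(-H) \oplus H'$ is the identity $\omega_{H'}(f(x), f(y)) = \omega_H(x, y)$; to see that the annihilator of $\Gamma_f$ is contained in $\Gamma_f$, take $u \oplus v$ orthogonal to every $x \oplus f(x)$, rewrite $\omega_{H'}(f(x), v) = \omega_H(x, f^{-1}(v))$ using that $f$ is a unitary isomorphism, and conclude $v = f(u)$ from non-degeneracy of $\omega_H$.

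The delicate step is showing that $\Gamma^0_\varphi$ is Lagrangian. Isotropy is as before. For the reverse inclusion, let $u \oplus v \in (-H) \oplus H'$ annihilate $\Gamma^0_\varphi$; given any $x \in H$, choose $\lambda \in \Lambda \setminus \{0\}$ with $\lambda\varphi(x) \in H'$, apply orthogonality to $\lambda x \oplus \varphi(\lambda x) \in \Gamma^0_\varphi$, and divide by $\lambda$ in $Q$. Extending $Q$-sesquilinearly, $u \oplus v$ annihilates the full $Q$-Lagrangian $\Gamma_\varphi$ (which is Lagrangian by the unitary-isomorphism case applied over $Q$), forcing $v = \varphi(u)$, hence $u \oplus v \in \Gamma^0_\varphi$. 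Functoriality of $-\otimes Q$ is trivial. For $\Gamma$, a direct computation gives $\Gamma_g \Gamma_f = \Gamma_{g\circ f}$, and the closure $\overline{\cdot}$ has no effect since graphs of $\Lambda$-linear maps are automatically saturated. For $\Gamma^0$, the product $\Gamma^0_\psi \Gamma^0_\varphi$ consists of pairs $x \oplus \psi\varphi(x)$ with both $\varphi(x)$ and $\psi\varphi(x)$ integral; any element of $\Gamma^0_{\psi \circ \varphi}$ becomes such a pair after rescaling by a $\lambda$ that clears $\varphi(x)$, so $\overline{\Gamma^0_\psi \Gamma^0_\varphi} = \Gamma^0_{\psi \circ \varphi}$.

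Faithfulness uses that non-degeneracy forces every Hermitian $\Lambda$-module to be torsion-free (a torsion element would annihilate the form), hence $H \hookrightarrow H \otimes Q$; then $f$ is determined by $f \otimes \mathit{id}_Q$, by $\Gamma_f$ via projection, and $\varphi$ is determined by $\Gamma^0_\varphi$ because $\{x \in H : \varphi(x) \in H'\}$ generates $H \otimes Q$ over $Q$. Commutativity is immediate: $\Gamma^0_{f \otimes \mathit{id}_Q} = \Gamma_{f \otimes \mathit{id}_Q} \cap (H \oplus H') = \Gamma_f$, since $(f \otimes \mathit{id}_Q)(x) = f(x) \in H'$ for every $x \in H$. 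The main obstacle throughout is the passage between $\Lambda$-integral and $Q$-rational data, used both in showing $\Gamma^0_\varphi$ is Lagrangian and in the composition law for $\Gamma^0$; both reduce, via the closure $\overline{\cdot}$ and careful denominator-clearing, to the corresponding statements over $Q$, after which the remaining verifications are routine.
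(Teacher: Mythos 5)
Your proposal is correct. Note that the paper itself gives no argument for this statement --- it simply cites \cite[Theorem~2.9]{CT} --- so there is no in-paper proof to compare against; your write-up is the natural self-contained argument one would expect to find there. The two genuinely delicate points are handled properly: you derive torsion-freeness of Hermitian modules from non-degeneracy over the integral domain (which is exactly what makes graphs saturated under $\overline{\,\cdot\,}$, makes $H\hookrightarrow H\otimes Q$, and justifies the denominator-clearing), and you prove that $\Gamma^0_\varphi$ is Lagrangian and that $\overline{\Gamma^0_\psi\Gamma^0_\varphi}=\Gamma^0_{\psi\circ\varphi}$ by rescaling into the integral lattice and reducing to the field case over $Q$, which is the right mechanism. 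The only omissions are cosmetic: you should also record that identities are preserved ($\Gamma_{\id_H}=\Gamma^0_{\id_{H\otimes Q}}=\Delta_H$, the identity of $\mathbf{Lagr}_\Lambda$), and that the extended form on $H\otimes Q$ remains non-degenerate (needed to apply the unitary-isomorphism case over $Q$); both are one-line checks using the same denominator-clearing.
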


We shall call such functors {\em embeddings of categories\/}.

%%%%%

\subsection{Cospans in a category with pushouts}
\label{sub:pushout}

Among the arguments that will be used in this article, some are well-known and of purely categorical nature. This subsection contains a quick review of these results
(see~\cite{Ben,Reb} for further detail).

Let us fix a category~$\mathbf{C}$. Throughout this subsection, all objects, morphisms, diagrams, and the like will be in this fixed category~$\mathbf{C}$.
Recall that a {\em span\/} is a diagram of the form~$T_1\stackrel{i_1}{\longleftarrow}H\stackrel{i_2}{\longrightarrow}T_2$. A {\em pushout\/} of such a span
is an object~$P$ together with morphisms~$T_1\stackrel{j_1}{\longrightarrow}P\stackrel{j_2}{\longleftarrow}T_2$ such that~$j_1 i_1=j_2 i_2$, which satisfies the following universal property:
for any~$T_1\stackrel{k_1}{\longrightarrow}Q\stackrel{k_2}{\longleftarrow}T_2$ such that~$k_1 i_1=k_2 i_2$, there exists a unique morphism~$u\colon P\to Q$
with~$u j_1=k_1$ and~$u j_2=k_2$. This is illustrated in the following commutative diagram:
\[
\xymatrix@R0.5cm{
 & Q  & & \\
 & P \ar@{.>}[u]^{u}  & & \\
T_1 \ar[ru]^{j_1} \ar@/^1pc/[uur]^{k_1}  & & T_2. \ar[ul]_{j_2}\ar@/_1pc/[uul]_{k_2} \\
  & H\ar[lu]_{i_1}\ar[ru]^{i_2} & }
\]
If a span admits a pushout, then the latter is unique up to canonical isomorphism. However, not all spans admit pushouts in general.
From now on, we shall assume that~$\mathbf{C}$ is a {\em category with pushouts\/}, i.e. that any span admits a pushout. Moreover,
we fix for each span a pushout. 

Let~$H,H'$ be two objects. A {\em cospan\/} from~$H$ to~$H'$ is a diagram~$H\stackrel{i}{\longrightarrow}T\stackrel{i'}{\longleftarrow}H'$.
Two cospans~$H\stackrel{i_1}{\longrightarrow}T_1\stackrel{i_1'}{\longleftarrow}H'$ and~$H\stackrel{i_2}{\longrightarrow}T_2\stackrel{i_2'}{\longleftarrow}H'$ are
{\em isomorphic\/} if there is an isomorphism~$f\colon T_1\rightarrow T_2$ such that~$f i_1=i _2$ and~$f i_1' = i_2'$.
The {\em composition\/} of two cospans~$H\stackrel{i_1}{\longrightarrow}T_1\stackrel{i_1'}{\longleftarrow}H'$
and~$H'\stackrel{i'_2}{\longrightarrow}T_2\stackrel{i_2''}{\longleftarrow}H''$ is the cospan from~$H$ to~$H''$ given by the (fixed) pushout diagram
\[
\xymatrix@R0.5cm{
& & T_2 \circ T_1 & & & \\
& T_1 \ar[ru]^{j_1} & & T_2 \ar[ul]_{j_2} \\
H \ar[ru]^{i_1} & & H' \ar[ru]^{i'_2}\ar[lu]_{i_1'} & &H''\,. \ar[lu]_{i_2''}&}
\]
Finally, the {\em identity cospan\/} of an object~$H$ is defined as the cospan~$I_H:=(H\stackrel{\mathit{id}}{\longrightarrow}H\stackrel{\mathit{id}}{\longleftarrow}H)$.

\begin{remark}
\label{rem:cospan}
Given any morphism~$H'\stackrel{i'}{\longrightarrow}H''$, one easily checks that
\[
\xymatrix@R0.5cm{
 & H'' & \\
H'\ar[ru]^{i'} &  & \quad H'' \ar[lu]_{\id}\\
  & H'\ar[lu]_{\id}\ar[ru]^{i'} & } \ \ \
\xymatrix@R0.5cm{
 & H'' & \\
H''\ar[ru]^{\id} &  & H' \ar[lu]_{i'}\\
  & H'\ar[lu]_{i'}\ar[ru]^{\id} & }
\]
are pushout diagrams. Therefore, if one makes this choice of pushout for spans of the
form~$H'\stackrel{\id}{\longleftarrow}H'\stackrel{i'}{\longrightarrow}H''$ and
~$H''\stackrel{i'}{\longleftarrow}H'\stackrel{\id}{\longrightarrow}H'$, then
the composition of~$H\stackrel{i}{\longrightarrow}H'\stackrel{\id}{\longleftarrow}H'$ and~$H'\stackrel{i'}{\longrightarrow}T\stackrel{i''}{\longleftarrow}H''$ is given
by~$H\stackrel{i'i}{\longrightarrow}T\stackrel{i''}{\longleftarrow}H''$, and the composition of~$H\stackrel{i}{\longrightarrow}T\stackrel{i'}{\longleftarrow}H'$
and~$H'\stackrel{\id}{\longrightarrow}H'\stackrel{i''}{\longleftarrow}H''$ is given by~$H\stackrel{i}{\longrightarrow}T\stackrel{i'i''}{\longleftarrow}H''$.
For this reason, cospans should be understood as generalizing morphisms in the category~$\mathbf{C}$.
\end{remark}

Note that the composition of cospans depends on the choice of a pushout for each span; therefore, it cannot be associative for all such choices.
For the same reason, the composition does not admit~$I_H$ as a two-sided unit in general. However, for any fixed choice of pushouts, the corresponding composition
does satisfy these properties up to canonical isomorphisms of cospans. We refer the reader to~\cite{Reb} for a proof of this standard fact in the dual context
of spans.

There are two possible strategies at this point. The first one, which we will use in the remaining part of Section~\ref{sec:Lagr}, is to consider the category
given by the objects of~$\mathbf{C}$, as objects, and the isomorphism classes of cospans in~$\mathbf{C}$, as morphisms. The second one, which we will use in the next sections,
is to follow the ``main principle of category theory " as stated in~\cite[p.179]{K-V}, that is: not to identify isomorphic cospans, but to view these canonical isomorphisms as part of the (higher) structure. This naturally leads to the concept of a {\em bicategory\/}, that will be reviewed in subsection~\ref{sub:2-cat} and used
in subsections~\ref{sub:2-cospan} and~\ref{sub:2mor}.

%%%%%

\subsection{The category~$\mathbf{L}_\Lambda$ of Lagrangian cospans} 
\label{sub:cospan}

We now take~$\mathbf{C}$ to be the category of~$\Lambda$-modules, with~$\Lambda$ any integral domain. After observing that this is a category with pushouts, we impose
further conditions on our cospans and work with isomorphism classes thereof.

We begin with the following standard result, whose easy proof is left to the reader. 

\begin{lemma}
\label{lemma:pushout}
The square
\[
\xymatrix@R0.5cm{
& P & \\
T_1 \ar[ru]^{j_1} & & T_2 \ar[ul]_{j_2} \\
& H \ar[ru]_{i_2}\ar[lu]^{i_1} &}
\]
is a pushout diagram in the category of~$\Lambda$-modules if and only if the sequence 
\[
\xymatrix@R0.5cm{
H\ar[r]^{\!\!\!\!\!\!\!(-i_1,i_2)} & T_1\oplus T_2\ar[r]^{\;\;\;\;\;{j_1\choose j_2}} & P \ar[r]& 0\,
}
\]
is exact.\qed
\end{lemma}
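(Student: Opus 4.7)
The statement is the standard characterization of pushouts in an abelian category (here, $\Lambda$-modules) as cokernels of a suitable difference map. My plan is to verify the two implications by a direct computation, leveraging the universal property of the pushout on one hand and that of the cokernel on the other.

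First I would rewrite the right-exactness condition in a useful form: exactness of the sequence
\[
H \xrightarrow{(-i_1,i_2)} T_1\oplus T_2 \xrightarrow{\binom{j_1}{j_2}} P \to 0
\]
amounts to the conjunction of (a) surjectivity of $\binom{j_1}{j_2}\colon T_1\oplus T_2\to P$, i.e.\ $P=j_1(T_1)+j_2(T_2)$, and (b) the kernel of $\binom{j_1}{j_2}$ is exactly the submodule $\{(-i_1(h),i_2(h))\mid h\in H\}$. Note that commutativity of the square, $j_1 i_1 = j_2 i_2$, is equivalent to the inclusion $\operatorname{Im}(-i_1,i_2)\subset\ker\binom{j_1}{j_2}$, so in either direction this compatibility is automatic.

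For the ``if'' direction I would assume the sequence is exact and verify the universal property directly. Given any $\Lambda$-module $Q$ and maps $k_1\colon T_1\to Q$, $k_2\colon T_2\to Q$ with $k_1 i_1 = k_2 i_2$, form $(k_1,k_2)\colon T_1\oplus T_2\to Q$. The hypothesis $k_1 i_1 = k_2 i_2$ says precisely that $(k_1,k_2)$ annihilates the image of $(-i_1,i_2)$, so by exactness it factors through the surjection $\binom{j_1}{j_2}$, yielding a (unique, by surjectivity) morphism $u\colon P\to Q$ with $u j_1 = k_1$ and $u j_2 = k_2$.

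For the ``only if'' direction I would assume the square is a pushout and use the already-proven ``if'' direction: set $P':=\operatorname{coker}(-i_1,i_2)$ with the canonical maps $j_1',j_2'$ from $T_1,T_2$. Then $(P',j_1',j_2')$ is also a pushout of the span $T_1\leftarrow H\to T_2$, hence the universal property supplies a canonical isomorphism $P\cong P'$ compatible with the structural maps; transporting the defining exact sequence of $P'$ through this isomorphism gives the desired exactness for $P$. As an alternative I would carry out the direct check: surjectivity of $\binom{j_1}{j_2}$ follows by applying the universal property to $Q:=j_1(T_1)+j_2(T_2)$ and observing that both $\binom{j_1}{j_2}$ and the identity on $P$ arise from the same cocone, while the kernel computation follows by applying the universal property to the quotient $(T_1\oplus T_2)/\operatorname{Im}(-i_1,i_2)$.

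I do not anticipate any real obstacle: the argument is purely diagrammatic and the only subtlety is bookkeeping the signs in $(-i_1,i_2)$, which is dictated by writing the commutativity relation $j_1 i_1 - j_2 i_2 = 0$ in additive form.
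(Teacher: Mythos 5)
Your proof is correct and complete: the paper leaves this lemma to the reader as a standard fact, and your argument (identifying commutativity of the square with $\operatorname{Im}(-i_1,i_2)\subset\ker{j_1\choose j_2}$, verifying the universal property directly from exactness, and deducing the converse by comparing $P$ with $\operatorname{coker}(-i_1,i_2)$ via uniqueness of pushouts) is exactly the intended standard argument.
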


In particular, a pushout is given by the cokernel of the map~$(-i_1,i_2) \colon H \rightarrow T_1 \oplus T_2$ sending~$x$ to~$(-i_1(x)) \oplus i_2(x)$, so this
is a category with pushouts.

By abuse of notation, we shall sometimes simply denote by~$T$ (the isomorphism class of) a cospan of the form~$H\to T\leftarrow H'$.
For a cospan~$H\stackrel{i}{\longrightarrow}T\stackrel{i'}{\longleftarrow}H'$, consider the submodule~$N_T:=\mathit{Ker}{-i\choose \phantom{-}i'}$ of~$H \oplus H'$,
where~${-i\choose \phantom{-}i'}\colon H \oplus H' \rightarrow T$ maps~$(x,y)$ to~$i'(y)-i(x)$.
Note that if~$T_1$ and~$T_2$ are isomorphic cospans, then~$N_{T_1}$ and~$N_{T_2}$ are equal. 

\begin{lemma}
\label{lem:composition}
For any two composable cospans~$T_1$ and~$T_2$, we have~$N_{T_2 \circ T_1}=N_{T_2} N_{T_1}$.
\end{lemma}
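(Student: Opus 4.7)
The plan is to unpack both sides of the claimed equality directly from the definitions and then verify the two inclusions. Writing the composable cospans as $H\stackrel{i_1}{\longrightarrow}T_1\stackrel{i_1'}{\longleftarrow}H'$ and $H'\stackrel{i_2'}{\longrightarrow}T_2\stackrel{i_2''}{\longleftarrow}H''$ with pushout object $P$ and structure maps $j_1,j_2$, we have
\[
N_{T_1}=\{(x,y)\in H\oplus H'\mid i_1(x)=i_1'(y)\},\quad N_{T_2}=\{(y,z)\in H'\oplus H''\mid i_2'(y)=i_2''(z)\},
\]
while $N_{T_2\circ T_1}=\{(x,z)\in H\oplus H''\mid j_1i_1(x)=j_2i_2''(z)\}$. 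The product $N_{T_2}N_{T_1}$ is, by definition, the set of $(x,z)$ for which some $y\in H'$ realises both defining equalities simultaneously.

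The inclusion $N_{T_2}N_{T_1}\subseteq N_{T_2\circ T_1}$ is the easy direction. Given such a witness $y$, the pushout commutativity $j_1i_1'=j_2i_2'$ (which holds essentially by definition of $P$) gives $j_1i_1(x)=j_1i_1'(y)=j_2i_2'(y)=j_2i_2''(z)$, so $(x,z)\in N_{T_2\circ T_1}$.

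For the reverse inclusion, I would invoke Lemma~\ref{lemma:pushout} applied to the span $T_1\stackrel{i_1'}{\longleftarrow}H'\stackrel{i_2'}{\longrightarrow}T_2$: the sequence
\[
H'\xrightarrow{(-i_1',\,i_2')} T_1\oplus T_2\xrightarrow{{j_1\choose j_2}} P\longrightarrow 0
\]
is exact. Now if $(x,z)\in N_{T_2\circ T_1}$, then $j_1(i_1(x))-j_2(i_2''(z))=0$ in $P$, so $(i_1(x),-i_2''(z))$ lies in the kernel of $\binom{j_1}{j_2}$. Exactness produces some $y\in H'$ with $i_1(x)=-i_1'(y)$ and $-i_2''(z)=i_2'(y)$; replacing $y$ by $-y$ yields $i_1(x)=i_1'(y)$ and $i_2'(y)=i_2''(z)$, exhibiting $(x,z)\in N_{T_2}N_{T_1}$. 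The only mild subtlety here is the sign bookkeeping when passing through the exact sequence, but this is purely cosmetic; the real content is Lemma~\ref{lemma:pushout}, which converts the pushout identity $j_1i_1(x)=j_2i_2''(z)$ into an explicit witness $y$. One should also remark that the right-hand side $N_{T_2}N_{T_1}$ and the definition of $N_{T_2\circ T_1}$ are independent of the choice of representative cospan in its isomorphism class, so the identity is well-posed.
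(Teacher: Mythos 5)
Your proof is correct and follows essentially the same route as the paper: both use Lemma~\ref{lemma:pushout} to identify the kernel of the map $H\oplus H''\to T_2\circ T_1$ with the preimage of the image of $(-i_1',i_2')$, producing the witness $y\in H'$. The only cosmetic difference is that you split the argument into two inclusions (the easy one via $j_1i_1'=j_2i_2'$), whereas the paper obtains the equality in a single chain of equivalences.
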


\begin{proof}
Consider two cospans~$H\stackrel{i_1}{\longrightarrow}T_1\stackrel{i_1'}{\longleftarrow}H'$ and~$H'\stackrel{i'_2}{\longrightarrow}T_2\stackrel{i_2''}{\longleftarrow}H''$.
By definition,~$N_{T_2 \circ T_1}$ is the kernel of the map~$H\oplus H''\to T_2\circ T_1$ given by~$(x,z)\mapsto j_2(i_2''(z))-j_1(i_1(x))$. Since~$T_2 \circ T_1$ is
represented by the cokernel of the map~$(-i_1',i'_2)\colon H' \rightarrow T_1 \oplus T_2$,~$N_{T_2 \circ T_1}$ consists of the elements~$x \oplus z\in H\oplus H''$ for
which~$(-i_1(x))\oplus i_2''(z)$ lies in the image of~$(-i_1',i'_2)$. Therefore,~$N_{T_2 \circ T_1}$ is equal to
\[
\lbrace x \oplus z \in H \oplus H'' |  \ i_1(x)=i_1'(y) \text{ and } i'_2(y)=i_2''(z) \text{ for some } y \in H'   \rbrace\,.
\]
In other words,~$N_{T_2 \circ T_1}$ is equal to~$\mathit{Ker}{-i_2'\choose \phantom{-}i_2''}\mathit{Ker}{-i_1\choose \phantom{-}i_1'}=N_{T_2}N_{T_1}$.
\end{proof}

Recall from the subsection~\ref{sub:Lagr} that if~$A$ is a submodule of a Hermitian~$\Lambda$-module~$H$, then~$ \overline{A}$ consists of all~$x \in H$ such that~$\lambda x\in A$
for a non-zero~$\lambda\in\Lambda$. We shall say that a cospan~$H\stackrel{i}{\longrightarrow}T\stackrel{i'}{\longleftarrow}H'$ is {\em Lagrangian\/}
if~$\overline{N_T}$ is a Lagrangian submodule of~$(-H)\oplus H'$. For instance, the identity cospan~$I_H$ is a Lagrangian cospan, since~$\overline{N_H}=N_H$ is equal to the diagonal relation~$\Delta_H$.
 
\begin{proposition}
\label{prop:1Lagr}
Hermitian~$\Lambda$-modules, as objects, and isomorphism classes of Lagrangian cospans, as morphisms, form a category~$\mathbf{L}_\Lambda$.
Moreover, the map~$T\mapsto\overline{N_T}$ gives rise to a full functor~$F\colon \mathbf{L}_\Lambda\rightarrow\mathbf{Lagr}_\Lambda$.
\end{proposition}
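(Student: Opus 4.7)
The plan is to deduce everything — that $\mathbf{L}_\Lambda$ is a category, that $F$ is a well-defined functor, and that $F$ is full — from Lemma~\ref{lem:composition} together with one elementary identity about the saturation operation: for any submodules $N_1\subset H\oplus H'$ and $N_2\subset H'\oplus H''$, one has
\[
\overline{N_2\, N_1}\;=\;\overline{\,\overline{N_2}\cdot\overline{N_1}\,}.
\]
The inclusion $\subset$ is immediate since $N_i\subset\overline{N_i}$. For $\supset$ it suffices to show $\overline{N_2}\cdot\overline{N_1}\subset\overline{N_2 N_1}$: given $(x,z)$ in the left-hand side with intermediate element $y\in H'$, there exist non-zero $\lambda_1,\lambda_2\in\Lambda$ with $(\lambda_1 x,\lambda_1 y)\in N_1$ and $(\lambda_2 y,\lambda_2 z)\in N_2$; multiplying respectively by $\lambda_2$ and $\lambda_1$ produces $(\lambda_1\lambda_2 x,\lambda_1\lambda_2 z)\in N_2 N_1$, and $\lambda_1\lambda_2\ne 0$ because $\Lambda$ is an integral domain.

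With this identity in hand, I would verify functoriality by direct computation. Well-definedness of $F$ on isomorphism classes is the observation preceding Lemma~\ref{lem:composition}, and $F(I_H)=\overline{\Delta_H}=\Delta_H$ is the identity of $H$ in $\mathbf{Lagr}_\Lambda$. Lemma~\ref{lem:composition} combined with the displayed identity then yields
\[
F(T_2\circ T_1)\;=\;\overline{N_{T_2}\, N_{T_1}}\;=\;\overline{\,\overline{N_{T_2}}\cdot\overline{N_{T_1}}\,}\;=\;F(T_2)\circ F(T_1),
\]
and Theorem~\ref{thm:Lagr} guarantees that this common value is a Lagrangian relation. In particular, the composition of two Lagrangian cospans is again Lagrangian, which is what is needed for $\mathbf{L}_\Lambda$ to admit the claimed morphisms; associativity and the unit law then follow from the general cospan formalism recalled in subsection~\ref{sub:pushout}, since morphisms are taken up to isomorphism of cospans.

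For fullness, given a Lagrangian relation $N\subset(-H)\oplus H'$, I would construct an explicit preimage as follows. Let $\sigma\colon H\oplus H'\to H\oplus H'$ send $(a,b)$ to $(-a,b)$, and set $T=(H\oplus H')/\sigma(N)$ with structural maps $i(x)=[(x,0)]$ and $i'(y)=[(0,y)]$. A direct calculation gives $\binom{-i}{i'}(x,y)=[(-x,y)]$, so $(x,y)\in N_T$ if and only if $(-x,y)\in\sigma(N)$, if and only if $(x,y)\in N$. Hence $N_T=N$; and since $N$ is Lagrangian, $\overline{N_T}=N$, proving that $T$ is a Lagrangian cospan with $F(T)=N$. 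The only non-trivial step in the whole argument is the saturation identity of the first paragraph; everything else is a bookkeeping exercise combining Lemma~\ref{lem:composition}, Theorem~\ref{thm:Lagr}, and the standard cospan formalism.
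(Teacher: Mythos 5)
Your proposal is correct, and its skeleton is the same as the paper's: quote the standard cospan formalism for well-definedness, associativity and units, use Lemma~\ref{lem:composition} together with Theorem~\ref{thm:Lagr} to show that Lagrangian cospans are closed under composition and that $F$ is functorial, and exhibit an explicit quotient cospan to prove fullness. Where you differ is in two details, and both are improvements in precision. First, your saturation identity $\overline{N_2N_1}=\overline{\,\overline{N_2}\,\overline{N_1}\,}$ is exactly the point the paper glosses over: since a cospan is called Lagrangian when $\overline{N_T}$ (not $N_T$) is Lagrangian, and $N_T$ need not be saturated when $T$ has torsion, one cannot simply write $N_{T_i}=\overline{N_{T_i}}$ as the published proof does; your identity is the correct bridge from $\overline{N_{T_2}N_{T_1}}$ to $\overline{N_{T_2}}\circ\overline{N_{T_1}}=F(T_2)\circ F(T_1)$, and its proof (clearing denominators, using that $\Lambda$ is a domain) is right. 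Second, for fullness the paper takes $T=(H\oplus H')/N$, which in fact yields $\overline{N_T}=\sigma(N)$ with $\sigma(a,b)=(-a,b)$ rather than $N$ itself; your choice $T=(H\oplus H')/\sigma(N)$ fixes this sign and gives $N_T=N$ on the nose (the paper's version still proves fullness, since $\sigma$ is an isometry of $(-H)\oplus H'$ and so permutes Lagrangian relations, but your construction is the cleaner one). In short: same route, with the two small gaps in the paper's own argument filled correctly.
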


\begin{proof}
As explained in subsection~\ref{sub:pushout}, it is a standard fact that the composition of isomorphism classes of cospans by pushouts is well-defined (i.e. does not depend on the choice of the pushouts), is associative,
with the identity cospan acting trivially. Therefore, we only need to check that the composition of two Lagrangian cospans~$H\to T_1\leftarrow H'$ and~$H'\to T_2\leftarrow H''$
is also Lagrangian, and that~$F$ is a full functor. By~Lemma~\ref{lem:composition}, we have
\[
\overline{N_{T_2\circ T_1}}=\overline{N_{T_2}N_{T_1}}=N_{T_2}\circ N_{T_1}\,.
\]
Since~$N_{T_2}$ are~$N_{T_1}$ are Lagrangian,~$N_{T_2}=\overline{N_{T_2}}, N_{T_1}=\overline{N_{T_1}}$ and~$N_{T_2}\circ N_{T_1}$ is also Lagrangian by Theorem~\ref{thm:Lagr}.
Therefore, the cospan~$H\to T_2 \circ T_1\leftarrow H''$ is Lagrangian and~$F$ is a functor. Finally, given a Lagrangian relation~$N$ from~$H$ to~$H'$, consider the
cospan~$H\stackrel{i}{\longrightarrow}T\stackrel{i'}{\longleftarrow}H'$ where~$T=(H\oplus H')/N$ and~$i$ (resp.~$i'$) is the inclusion of~$H$ (resp.~$H'$) into~$H\oplus H'$ composed
with the canonical projection. By construction, it is a Lagrangian cospan with~$\overline{N_T}=\overline{N}=N$, so the functor~$F$ is full.
\end{proof}

Let us conclude this subsection by noting that the functor~$F\colon \mathbf{L}_\Lambda\rightarrow\mathbf{Lagr}_\Lambda$ is {\em not\/} faithful.
Indeed, given any cospan~$H\stackrel{i}{\longrightarrow}T\stackrel{i'}{\longleftarrow}H'$ and any~$\Lambda$-module~$\widetilde{T}$, consider the cospan given
by~$H\stackrel{(i,0)}{\longrightarrow}T\oplus\widetilde{T}\stackrel{(i',0)}{\longleftarrow}H'$. One immediately checks the equality~$N_{T\oplus\widetilde{T}}=N_T$.
Therefore, if the first cospan is Lagrangian and~$\widetilde{T}$ is non-trivial, then these two cospans represent different morphisms in~$\mathbf{L}_\Lambda$ mapped by~$F$
to the same morphism in~$\mathbf{Lagr}_\Lambda$.

%%%%%

\subsection{The core of the category~$\mathbf{L}_\Lambda$}
\label{sub:core}

Recall that the {\em core\/} of a category~$\mathcal{C}$ is the maximal sub-groupoid of~$\mathcal{C}$. In other words,~$\mathit{core}(\mathcal{C})$ is
the subcategory of~$\mathcal{C}$ consisting of all objects of~$\mathcal{C}$ and with morphisms all the isomorphisms of~$\mathcal{C}$.

We shall say that a cospan~$H\stackrel{i}{\longrightarrow}T\stackrel{i'}{\longleftarrow}H'$ is~{\em invertible} if~$i$ and~$i'$ are~$\Lambda$-isomorphisms.

\begin{proposition}
\label{prop:core}
The core of~$\mathbf{L}_\Lambda$ consists of Hermitian~$\Lambda$-modules, as objects, and isomorphism classes of invertible Lagrangian
cospans, as morphisms. Furthermore, the map assigning to such a cospan~$H\stackrel{i}{\longrightarrow}T\stackrel{i'}{\longleftarrow}H'$
the~$\Lambda$-isomorphism~$i'^{-1}i\colon H\to H'$ gives rise to an equivalence of categories~$\mathit{core}(\mathbf{L}_\Lambda) \stackrel{\simeq}{\longrightarrow}\mathbf{U}_\Lambda$
which fits in the commutative diagram
\[
\xymatrix@R0.5cm{ 
\mathit{core}(\mathbf{L}_\Lambda)\ar[r]\ar[d]_\simeq&\mathbf{L}_\Lambda\ar[d]^F\\
\mathbf{U}_\Lambda\ar[r]^{\Gamma}&\mathbf{Lagr}_\Lambda\,.
}
\]
\end{proposition}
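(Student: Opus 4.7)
The plan is to establish the two parts of the statement in turn: first, the characterization of $\mathit{core}(\mathbf{L}_\Lambda)$ as consisting of Lagrangian cospans whose two legs are both $\Lambda$-isomorphisms; and second, that the rule $\Phi\colon [H \stackrel{i}{\longrightarrow} T \stackrel{i'}{\longleftarrow} H'] \mapsto i'^{-1} i$ defines an equivalence with $\mathbf{U}_\Lambda$ fitting into the advertised commutative square.

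For the first part, the easy direction is to check that a Lagrangian cospan with both legs $\Lambda$-isomorphisms is invertible, with candidate inverse $H' \stackrel{i'}{\longrightarrow} T \stackrel{i}{\longleftarrow} H$: the Lagrangian condition is inherited, and a short computation using the explicit pushout formula of Lemma~\ref{lemma:pushout} shows that the composition in either order is isomorphic to the identity cospan, the required cospan isomorphism being $i^{-1}$ (resp.~$i'^{-1}$). The converse is the main obstacle. Assume $H \stackrel{i}{\longrightarrow} T \stackrel{i'}{\longleftarrow} H'$ admits an inverse $H' \stackrel{j}{\longrightarrow} S \stackrel{j'}{\longleftarrow} H$ in $\mathbf{L}_\Lambda$. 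Writing $P$ for the pushout of $T \stackrel{i'}{\longleftarrow} H' \stackrel{j}{\longrightarrow} S$ with structure maps $k_1, k_2$, the composition being isomorphic to $I_H$ yields a $\Lambda$-isomorphism $\phi\colon P \to H$ satisfying $\phi k_1 i = \mathit{id}_H = \phi k_2 j'$. Consequently $k_1 i$ and $k_2 j'$ are $\Lambda$-isomorphisms, from which $i, j'$ are injective and $k_1, k_2$ are surjective. The explicit description $P = (T \oplus S)/\{(-i'(y), j(y)) : y \in H'\}$ provided by Lemma~\ref{lemma:pushout} then shows, from the surjectivity of $k_1$ (resp.~$k_2$), that $j$ (resp.~$i'$) is itself surjective. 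Repeating the entire argument with the composition in the opposite order (whose pushout is isomorphic to $I_{H'}$) gives the complementary statement: $i$ and $j'$ are surjective while $j$ and $i'$ are injective. Combining both sets of conclusions, all four legs are bijective, hence $\Lambda$-isomorphisms.

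For the second part, well-definedness of $\Phi$ on isomorphism classes of cospans is automatic from the cospan-isomorphism compatibility relations, and a direct pushout computation using $j_1 i_1' = j_2 i_2'$ (with $j_1, j_2$ the pushout legs) rearranges $(j_2 i_2'')^{-1}(j_1 i_1)$ into $i_2''^{-1} i_2' i_1'^{-1} i_1 = \Phi([T_2]) \circ \Phi([T_1])$, establishing functoriality. To see that $\Phi$ takes values in $\mathbf{U}_\Lambda$, note that when $i$ and $i'$ are isomorphisms one computes $N_T = \Gamma_{i'^{-1} i}$; the cospan being Lagrangian thus translates into the graph of $i'^{-1} i$ being Lagrangian in $(-H) \oplus H'$, which in turn is equivalent to $i'^{-1} i$ being unitary. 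Essential surjectivity is immediate since $\Phi$ is the identity on objects; fullness follows from the identity $\Phi([H \stackrel{f}{\longrightarrow} H' \stackrel{\mathit{id}}{\longleftarrow} H']) = f$ for any unitary $f$; and faithfulness follows because the map $i'^{-1}\colon T \to H'$ furnishes a cospan isomorphism between any invertible Lagrangian cospan and its canonical form $H \stackrel{i'^{-1}i}{\longrightarrow} H' \stackrel{\mathit{id}}{\longleftarrow} H'$.

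Commutativity of the diagram is then immediate: for an invertible Lagrangian cospan the relation $N_T = \Gamma_{i'^{-1}i}$ is already saturated (the graph of a $\Lambda$-isomorphism is saturated in $H \oplus H'$ when $\Lambda$ is an integral domain), so $F([T]) = \overline{N_T} = \Gamma_{i'^{-1}i} = \Gamma(\Phi([T]))$, which is precisely the assertion.
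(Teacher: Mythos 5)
Your proof is correct, and the genuine divergence from the paper lies in the converse half of the core characterization. Where the paper assembles, from the cokernel representatives of the two compositions, a commutative ladder of short exact sequences and invokes the five-lemma (after first using the universal property to check that the maps $\varphi'j_1'i_1$ and $\varphi j_1i_1'$ are mutually inverse), you read everything off directly from the cokernel presentation of Lemma~\ref{lemma:pushout}: the composite being isomorphic to the identity cospan forces $k_1i$ and $k_2j'$ to be isomorphisms, hence $i,j'$ injective and $k_1,k_2$ surjective, and surjectivity of the pushout legs translates, via the explicit description of $P$, into surjectivity of $j$ and $i'$; the symmetric argument for the other composite then upgrades all four legs to isomorphisms. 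This is more elementary and bypasses the five-lemma at the cost of a quotient-chasing computation, whereas the paper's route is more structural and recycles the universal property it has already set up. The rest of your argument (functoriality via $j_1i_1'=j_2i_2'$, unitarity via $N_T=\Gamma_{i'^{-1}i}$, fullness via the cospan $H\stackrel{f}{\longrightarrow}H'\stackrel{\mathit{id}}{\longleftarrow}H'$, faithfulness via the canonical form, and the commutative square) coincides with the paper's proof. Two small points to tighten: the rearrangement of $(j_2i_2'')^{-1}(j_1i_1)$ presupposes that the pushout leg $j_2$ is invertible, which deserves a word (either because the composite of two core morphisms is again a core morphism, whose legs are isomorphisms by the part you just proved, or by a direct computation showing that pushing out a span of isomorphisms yields isomorphisms); and the saturation $\overline{\Gamma_{i'^{-1}i}}=\Gamma_{i'^{-1}i}$ holds not merely because $\Lambda$ is a domain but because $H'$ is torsion-free, which follows from the nondegeneracy of the skew-Hermitian form --- the same fact the paper uses tacitly.
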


\begin{proof}
Given a cospan~$H\stackrel{i}{\longrightarrow}T\stackrel{i'}{\longleftarrow}H'$ with~$i$ an isomorphism, one easily checks that the diagram
\[
\xymatrix@R0.5cm{
& & H & & \\
& T \ar[ru]^{i^{-1}} & & T \ar[ul]_{i^{-1}} & \\
H \ar[ru]^{i} & & H' \ar[ru]^{i'}\ar[lu]_{i'} & &H \ar[lu]_{i}}
\]
satisfies the universal property for the pushout defining the composition of~$H\stackrel{i}{\longrightarrow}T\stackrel{i'}{\longleftarrow}H'$ with~$H'\stackrel{i'}{\longrightarrow}T\stackrel{i}{\longleftarrow}H$. Hence, if both~$i$ and~$i'$ are isomorphisms, then these cospans are inverse of one another, and therefore isomorphisms in~$\mathbf{L}_\Lambda$, i.e. morphisms in~$\mathit{core}(\mathbf{L}_\Lambda)$.
Conversely, let~$H\stackrel{i_1}{\longrightarrow}T_1\stackrel{i_1'}{\longleftarrow}H'$ be a morphism in~$\mathit{core}(\mathbf{L}_\Lambda)$, and
let~$H'\stackrel{i_2'}{\longrightarrow}T_2\stackrel{i_2}{\longleftarrow}H$ be its inverse. Working with the cokernel representatives of~$T_2\circ T_1$ and~$T_1\circ T_2$, this means
that there exist~$\Lambda$-isomorphisms~$C:=\mathit{Coker}(-i_1',i'_2)\stackrel{\varphi}{\to} H$ and~$C':=\mathit{Coker}(-i_2,i_1)\stackrel{\varphi'}{\to} H'$ such that the following diagrams commute:
\[
\xymatrix@R0.5cm{
& & H & & \\
& & C \ar[u]^{\varphi}_{\simeq} & & \\
& T_1 \ar[ru]^{j_1} & & T_2 \ar[ul]_{j_2} &\\
H \ar[ru]^{i_1} \ar@/^2pc/[rruuu]^{\mathit{id}_H} & & H' \ar[ru]^{i'_2}\ar[lu]_{i_1'} & &H \ar[lu]_{i_2}\ar@/_2pc/[lluuu]_{\mathit{id}_H}}
 \ %\qquad
\xymatrix@R0.5cm{
& & H' & & \\
& & C' \ar[u]^{\varphi'}_{\simeq} & & \\
& T_2 \ar[ru]^{j'_2} & & T_1 \ar[ul]_{j'_1} &\\
H' \ar[ru]^{i'_2} \ar@/^2pc/[rruuu]^{\mathit{id}_{H'}} & & H \ar[ru]^{i_1}\ar[lu]_{i_2} & &H' \ar[lu]_{i'_1}\ar@/_2pc/[lluuu]_{\mathit{id}_{H'}}\,.}
\]
This implies that the following diagram has exact rows, and is commutative:
\[
\xymatrix{
0 \ar[r] & H\ar[r]^{\!\!\!\!\!\!\!(-1,1)}\ar[d]_{\varphi'j_1' i_1}& H\oplus H \ar[r]^{\;\;\;\;\;{1\choose 1}}\ar[d]^{i_1\oplus i_2}&H\ar[d]^{\varphi^{-1}}_\simeq\ar[r]& 0\phantom{\,.}\\
0 \ar[r] & H'\ar[r]_{\!\!\!\!\!\!\!(-i_1',i'_2)} & T_1\oplus T_2\ar[r]_{\;\;\;\;\;{j_1\choose j_2}} & C \ar[r]& 0\,.
}
\]
Using the universal property of the pushouts~$T_2\circ T_1$ and~$T_1\circ T_2$, one can check that the maps~$\varphi' j_1'i_1=\varphi' j_2'i_2\colon H\to H'$
and~$\varphi j_1i_1'=\varphi j_2i'_2\colon H'\to H$ are inverse of each other, and therefore isomorphisms. By the five-lemma applied to the diagram above,~$i_1$
and~$i_2$ are also isomorphisms. Exchanging the roles of~$T_1$ and~$T_2$ leads to the same conclusion for~$i_1'$ and~$i'_2$, so both these cospans are invertible.

Now, let~$G\colon\mathit{core}(\mathbf{L}_\Lambda) \rightarrow \mathbf{U}_\Lambda$ be defined by assigning to the invertible
cospan~$H\stackrel{i}{\longrightarrow}T\stackrel{i'}{\longleftarrow}H'$
the~$\Lambda$-isomorphism~$i'^{-1}i\colon H\to H'$. First note that isomorphic cospans are mapped to the same isomorphism. Next, observe that for any two invertible 
cospans~$H\stackrel{i_1}{\longrightarrow}T_1\stackrel{i_1'}{\longleftarrow}H'$ and~$H'\stackrel{i'_2}{\longrightarrow}T_2\stackrel{i_2}{\longleftarrow}H''$, we have
\[
G(T_2 \circ T_1)=(j_2i_2)^{-1}(j_1i_1)=i_2^{-1}j_2^{-1}j_1i_1=i_2^{-1}i'_2i_1'^{-1}i_1 = G(T_2) \circ G(T_1)\,.
\]
(Here, we used the fact that since~$j_2i_2$ and~$i_2$ are isomorphisms, so is~$j_2$.)
We now check that if a cospan~$H\stackrel{i}{\longrightarrow}T\stackrel{i'}{\longleftarrow}H'$ is Lagrangian, then~$i'^{-1}i$ is unitary. Indeed,
\[
F(T)=\overline{N_T}=\overline{\mathit{Ker}{\textstyle{-i\choose \phantom{-}i'}}}=\mathit{Ker}{\textstyle{-i\choose \phantom{-}i'}}=\mathit{Ker}{\textstyle{-i'^{-1}i\choose\mathit{id}}}=\Gamma_{i'^{-1}i}
\]
is a Lagrangian subspace of~$(-H)\oplus H'$. Therefore, for any~$x\in H$ and~$y\in H'$, we have
\[
0=(-\omega \oplus \omega')(x \oplus i'^{-1}(i(x)), y \oplus i'^{-1}(i(y)))=-\omega(x,y)+\omega'(i'^{-1}(i(x)),i'^{-1}(i(y)))\,,
\]
so~$i'^{-1}i$ is indeed unitary. The equality~$F(T)=\Gamma_{i'^{-1}i}$ displayed above also shows the commutativity of the diagram in the statement.

It only remains to check that~$G$ is a fully-faithful functor. Given any unitary isomorphism~$f\colon H\to H'$, the
cospan~$H\stackrel{f}{\longrightarrow}H'\stackrel{\mathit{id}}{\longleftarrow}H'$ is invertible, Lagrangian, and is mapped to~$f$ by~$G$. Finally,
if~$H\stackrel{i_1}{\longrightarrow}T_1\stackrel{i_1'}{\longleftarrow}H'$ and~$H\stackrel{i_2}{\longrightarrow}T_2\stackrel{i_2'}{\longleftarrow}H'$ are invertible cospans
with~$i_1'^{-1}i_1=i_2'^{-1}i_2$, then the map~$i_2i_1^{-1}=i'_2i_1'^{-1}\colon T_1\to T_2$ defines an isomorphism between these two cospans.
\end{proof}

Since~$\Lambda$ is an integral domain, we can consider its quotient field~$Q=Q(\Lambda)$. We shall call a cospan~$H\stackrel{i}{\longrightarrow}T\stackrel{i'}{\longleftarrow}H'$
{\em rationally invertible\/} if~$i_Q:= i \otimes Q$ and~$i_Q':=i' \otimes Q$ are~$Q$-isomorphisms. The next proposition can be checked by the same arguments as
Proposition~\ref{prop:core}. We therefore leave the proof to the reader.

\begin{proposition}
\label{prop:rationally}
Hermitian~$\Lambda$-modules, as objects, and isomorphism classes of rationally invertible Lagrangian cospans, as morphisms, form a
category~$\mathit{core}(\mathbf{L}_\Lambda)^0$. Furthermore, the map assigning to such a cospan~$H\stackrel{i}{\longrightarrow}T\stackrel{i'}{\longleftarrow}H'$
the~$Q$-isomorphism~$i_Q'^{-1}i_Q$ gives rise to a full functor~$\mathit{core}(\mathbf{L}_\Lambda)^0\rightarrow\mathbf{U}^0_\Lambda$ which fits in
the following commutative diagram:
\[
\xymatrix@R0.5cm{ 
\mathit{core}(\mathbf{L}_\Lambda)^0\ar[r]\ar[d]&\mathbf{L}_\Lambda\ar[d]^F\\
\mathbf{U}^0_\Lambda\ar[r]^{\Gamma^0}&\mathbf{Lagr}_\Lambda\,.
}
\]
\qed
\end{proposition}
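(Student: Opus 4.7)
The plan is to adapt the arguments of Proposition~\ref{prop:core} after replacing ``$\Lambda$-isomorphism'' by ``$Q$-isomorphism'' throughout, exploiting the fact that~$-\otimes_\Lambda Q$ is exact, hence preserves pushouts, and that a composition of~$Q$-isomorphisms is a~$Q$-isomorphism. First I would check that rationally invertible Lagrangian cospans form a subcategory of~$\mathbf{L}_\Lambda$. The identity cospan is trivially rationally invertible. For closure under composition, I would tensor the defining pushout square of~$T_2\circ T_1$ with~$Q$: the result is still a pushout, now of a span whose two legs~$(i_1')_Q$ and~$(i_2')_Q$ are~$Q$-isomorphisms, so the induced maps~$(j_1)_Q$ and~$(j_2)_Q$ are isomorphisms as well. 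It follows that~$(j_1 i_1)_Q$ and~$(j_2 i_2'')_Q$ are~$Q$-isomorphisms, so rational invertibility is preserved.

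Next I would define~$G\colon\mathit{core}(\mathbf{L}_\Lambda)^0\to\mathbf{U}^0_\Lambda$ by~$T\mapsto (i_Q')^{-1}i_Q$. Well-definedness on isomorphism classes of cospans and the identity~$G(T_2\circ T_1)=G(T_2)\circ G(T_1)$ are formally the same as the corresponding arguments in Proposition~\ref{prop:core}, using the relation~$(j_1)_Q(i_1')_Q=(j_2)_Q(i_2')_Q$ in~$T_2\circ T_1\otimes Q$ to cancel the middle terms. For unitarity and for the commutative diagram with~$\Gamma^0$, I would identify~$\overline{N_T}$ with~$\Gamma^0_{(i_Q')^{-1}i_Q}$ inside~$H\oplus H'$: one checks that~$\overline{N_T}$ consists precisely of those~$(x,y)\in H\oplus H'$ for which~$i'(y)-i(x)$ is~$\Lambda$-torsion in~$T$, or equivalently~$(x,y)\otimes 1\in\ker\bigl((H\oplus H')\otimes Q\to T\otimes Q\bigr)=\Gamma_{(i_Q')^{-1}i_Q}$. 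Since the graph of a~$Q$-linear map between Hermitian~$Q$-modules is Lagrangian if and only if the map is unitary, this simultaneously yields unitarity of~$G(T)$ and the identity~$F(T)=\Gamma^0_{G(T)}$ which is precisely the commutativity of the diagram.

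Finally, for fullness, given a unitary~$Q$-isomorphism~$\varphi\colon H\otimes Q\to H'\otimes Q$, I would mimic the construction at the end of the proof of Proposition~\ref{prop:1Lagr}: set~$T:=(H\oplus H')/\Gamma^0_\varphi$ with~$i,i'$ induced by the two inclusions (with a sign convention chosen so that~$N_T=\Gamma^0_\varphi$). This is by construction a Lagrangian cospan with~$G(T)=\varphi$, and it is rationally invertible because~$T\otimes Q\cong\bigl((H\oplus H')\otimes Q\bigr)/\Gamma_\varphi$ is identified with~$H\otimes Q$ via~$i_Q$ (and with~$H'\otimes Q$ via~$i_Q'$) as soon as~$\varphi$ is a~$Q$-isomorphism. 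I expect the only nontrivial step to be the torsion-theoretic identification~$\overline{N_T}=\Gamma^0_{(i_Q')^{-1}i_Q}$ of the previous paragraph, since this is the one place where one must translate carefully between the integral and rational pictures; everything else reduces formally to the corresponding step of Proposition~\ref{prop:core} after tensoring with~$Q$.
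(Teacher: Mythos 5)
Your proposal is correct and follows essentially the route the paper intends: the paper itself leaves this proof to the reader as ``the same arguments as Proposition~\ref{prop:core}'', and your adaptation—tensoring with~$Q$ (which is exact and preserves pushouts), identifying~$\overline{N_T}$ with the restricted graph~$\Gamma^0_{(i_Q')^{-1}i_Q}$, and checking unitarity and fullness—is exactly that adaptation carried out. Your only genuine deviation is sensible and necessary: for fullness the integral construction~$H\stackrel{f}{\to}H'\stackrel{\id}{\leftarrow}H'$ of Proposition~\ref{prop:core} is unavailable rationally, and you correctly substitute the quotient construction from Proposition~\ref{prop:1Lagr} (with the sign adjusted), together with a denominator-clearing step to pass between the~$\Lambda$-Lagrangian condition on~$\Gamma^0$ and unitarity over~$Q$.
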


Summarizing this section, we have six categories which all have Hermitian~$\Lambda$-modules as objects. They fit in the following commutative diagram
\begin{equation}
\label{equ:summary}
\xymatrix@R0.5cm{ 
\mathit{core}(\mathbf{L}_\Lambda) \ar[r] \ar[d]_\simeq
 & \mathit{core}(\mathbf{L}_\Lambda)^0  \ar[d]\ar[r] & \mathbf{L}_\Lambda \ar[d]^F\\
\mathbf{U}_\Lambda \ar[r]^{-\otimes Q} \ar@/_2pc/[rr]_\Gamma & \mathbf{U}_\Lambda^0 \ar[r]^{\Gamma^0}& \mathbf{Lagr}_\Lambda\,,}
\end{equation}
where the horizontal arrows are embeddings of categories, the left-most vertical arrow is an equivalence of categories, and the two remaining ones are full functors.

%%%%%%%%%%%%%%%%%%%%%%%%%%%%%%%%%

\section{A bicategory of Lagrangian cospans}
\label{sec:2cat}

The aim of this section is to endow~$\mathbf{L}_\Lambda$ with the structure of a bicategory. We begin by recalling in
subsection~\ref{sub:2-cat} the notions of bicategory and weak~$2$-functor, before defining the bicategory
of Lagrangian cospans in subsection~\ref{sub:2-cospan}.

%%%%%

\subsection{2-categories and 2-functors}
\label{sub:2-cat}

Following the original work of B\'enabou~\cite{Ben}, it is a traditional practice to use the term ``$2$-category'' for what Kapranov and Voevodsky call
a ``strict~$2$-category''~\cite{K-V}. As it turns out, the categories that appear in our work are not of this type, but have a richer structure: that of some type of weak~$2$-category
known as a {\em bicategory\/}.  We now recall the definition of this structure, following~\cite{Ben}.

A {\em bicategory\/}~$\mathcal{C}$ consists of the following data:
\begin{enumerate}[(i)]
\item{A set~$\mathrm{Ob}\mathcal{C}$ whose elements are called {\em objects\/}.}
\item{For each pair of objects~$(X,Y)$, a category~$\mathcal{C}(X,Y)$ whose objects are called {\em~$1$-morphisms} and denoted by~$f\colon X\to Y$ or by~$X\stackrel{f}{\to}Y$,
whose morphisms are called {\em~$2$-morphisms} and denoted by~$\alpha\colon f\Rightarrow g$, or by~$\xymatrix@C+2pc{X\rtwocell^{f}_{g}{\;\;\;\alpha}&Y}​$, and whose composition
is called {\em vertical composition\/} and denoted by
\[
\left(\xymatrix@C+2pc{X\rtwocell^{f}_{g}{\;\;\;\alpha}&Y},\xymatrix@C+2pc{X\rtwocell^{g}_{h}{\;\;\;\beta}&Y}\right)\mapsto\xymatrix@C+2pc{X\rtwocell^{f}_{h}{\;\;\;\;\;\;\beta\star\alpha}&Y}\,.
\]
We shall denote the identity morphism for~$f$ by~$\mathit{Id}_f\colon f\Rightarrow f$.
}
\item{For each object~$X$, an {\em identity\/}~$1$-morphism~$I_X\colon X\to X$.}
\item{For each triple of objects~$(X,Y,Z)$, a functor~$\mathcal{C}(X,Y)\times\mathcal{C}(Y,Z)\to\mathcal{C}(X,Z)$ denoted by
\[
\left(\xymatrix@C+2pc{X\rtwocell^{f}_{g}{\;\;\;\alpha}&Y},\xymatrix@C+2pc{Y\rtwocell^{k}_{\ell}{\;\;\;\beta}&Z}\right)\mapsto\xymatrix@C+2pc{X\rtwocell^{k\circ f}_{\ell\circ g}{\;\;\;\;\;\;\beta\bullet\alpha}&Z\,,}
\]
and called the {\em horizontal composition\/} functor.}
\end{enumerate}
Note that the functoriality of this composition boils down to the identity 
\[
\mathit{Id}_f \bullet \mathit{Id}_g = \mathit{Id}_{g\circ f}
\]
for any composable~$1$-morphisms~$f$ and~$g$, and to the {\em interchange law\/}
\[
(\delta \bullet \beta) \star (\gamma \bullet \alpha)=(\delta \star \gamma) \bullet (\beta \star \alpha)\,,
\]
for each composable~$2$-morphisms~$\alpha,\beta,\gamma$ and~$\delta$. This last condition is best understood by saying that the following~$2$-morphism is well-defined, i.e. independent of the order of the compositions:
\[
\xymatrix@C+2pc{
X\ruppertwocell^{}{\;\;\;\alpha} \ar[r] \rlowertwocell_{}{\;\;\;\beta}  & Y \ruppertwocell^{}{\;\;\;\gamma} \ar[r] \rlowertwocell_{}{\;\;\;\delta} & Z\,.} 
\]

If this horizontal composition is associative (both on the~$1$-morphisms and~$2$-morphisms) and admits~$I_X$ as a two-sided unit, then we are in the presence
of a (strict)~$2$-category. As mentioned above, a bicategory has a richer structure: the horizontal composition is associative and unital
only up to natural isomorphisms, which are part of the structure. To be more precise, a bicategory also contains the following data:
\begin{enumerate}[(i)]\setcounter{enumi}{4}
\item{For any triple of composable~$1$-morphisms~$f,g,h$, an invertible~$2$-morphism
\[
a=a_{fgh}\colon (h\circ g)\circ f\Rightarrow h\circ(g\circ f)
\]
which is natural in~$f,g$ and~$h$, and called the {\em associativity isomorphism\/}.}
\item{For any~$1$-morphism~$X\stackrel{f}{\to}Y$, two invertible~$2$-morphisms~$\ell=\ell_f\colon I_Y\circ f\Rightarrow f$ and~$r=r_f\colon f\circ I_X \Rightarrow f$ which are natural in~$f$.}
\end{enumerate}

These natural isomorphisms must satisfy the following two coherence axioms.
Given four composable~$1$-morphisms~$e,f,g,h$, there are two natural ways to pass from~$((h\circ g)\circ f)\circ e$ to~$h\circ(g\circ(f\circ e))$ using the associativity isomorphisms, one in two steps, the other one in three.
The {\em associativity coherence axiom \/} requires that these two compositions coincide.
Finally, given any~$1$-morphisms~$X\stackrel{f}{\to}Y\stackrel{g}{\to}Z$, the {\em identity coherence axiom\/} requires the
composition~$(g\circ I_Y)\circ f\stackrel{a}{\Longrightarrow} g\circ(I_Y\circ f)\stackrel{\mathit{Id}_g\bullet\ell_f}{\Longrightarrow}g\circ f$ to coincide
with~$r_g\bullet\mathit{Id}_f$. 

\bigskip

Let us now recall the definition of a weak~$2$-functor, also known as a {\em pseudofunctor\/}~\cite{Gro} and originally called a {\em homomorphism of bicategories\/} by
B\'enabou~\cite{Ben}.

Given two bicategories~$\mathcal{C}$ and~$\mathcal{D}$, a {\em weak~$2$-functor\/}~$\mathcal{F}\colon\mathcal{C}\to\mathcal{D}$
consists of the following data:
\begin{enumerate}[(i)]
\item{A map~$\mathcal{F}\colon\mathrm{Ob}\mathcal{C}\to\mathrm{Ob}\mathcal{D}$.}
\item{For each pair of objects~$(X,Y)$ in~$\mathcal{C}$, a functor~$\mathcal{C}(X,Y)\to\mathcal{D}(\mathcal{F}(X),\mathcal{F}(Y))$
denoted by 
\[
\xymatrix@C+2pc{X\rtwocell^{f}_{g}{\;\;\;\alpha}&Y}\mapsto\xymatrix@C+2pc{\mathcal{F}(X)\rtwocell^{\mathcal{F}(f)}_{\mathcal{F}(g)}{\;\;\;\;\;\;\mathcal{F}(\alpha)}&\mathcal{F}(Y)}\,.
\]}
\end{enumerate}
Note that this functoriality is equivalent to the identities
\[
\mathcal{F}(\beta\star \alpha)=\mathcal{F}(\beta)\star\mathcal{F}(\alpha)\quad\text{and}\quad\mathcal{F}(\mathit{Id}_f)=\mathit{Id}_{\mathcal{F}(f)}\,.
\]
If we also have the identities~$\mathcal{F}(I_X)=I_{\mathcal{F}(X)}$,~$\mathcal{F}(g\circ f)=\mathcal{F}(g)\circ\mathcal{F}(f)$
and~$\mathcal{F}(\beta\bullet\alpha)=\mathcal{F}(\beta)\bullet\mathcal{F}(\alpha)$, then we are in the presence of a (strict)~$2$-functor.
Our functor has a finer structure: once again, these identities hold up to isomorphisms of functors, which are part of the data as follows.
\begin{enumerate}[(i)]\setcounter{enumi}{2}
\item{For each object~$X$ of~$\mathcal{C}$, an invertible~$2$-morphism~$\varphi_X\colon I_{\mathcal{F}(X)}\Rightarrow\mathcal{F}(I_X)$ in~$\mathcal{D}$.}
\item{For each~$X\stackrel{f}{\to}Y\stackrel{g}{\to}Z$, an invertible~$2$-morphism~$\varphi=\varphi_{fg}\colon\mathcal{F}(g)\circ\mathcal{F}(f)\Rightarrow\mathcal{F}(g\circ f)$
in~$\mathcal{D}$ such that for any
\[
\xymatrix@C+2pc{X\rtwocell^{f}_{f'}{\;\;\;\alpha}&Y\rtwocell^{g}_{g'}{\;\;\;\beta}&Y\,,}
\]
we have the following equality of~$2$-morphisms in~$\mathcal{D}$:
\[
\varphi_{f'g'}\star(\mathcal{F}(\beta)\bullet\mathcal{F}(\alpha))=\mathcal{F}(\beta\bullet\alpha)\star\varphi_{fg}\,\colon\,\mathcal{F}(g)\circ\mathcal{F}(f)\Rightarrow\mathcal{F}(g'\circ f')\,.
\]}
\end{enumerate}
Finally, these isomorphisms of functors are required to satisfy two coherence axioms.
\begin{enumerate}[(1)]
\item{For any composable~$1$-morphisms~$f,g,h$ of~$\mathcal{C}$, the composition
\[
(\mathcal{F}(h)\circ\mathcal{F}(g))\circ\mathcal{F}(f)\stackrel{a}{\Rightarrow}\mathcal{F}(h)\circ(\mathcal{F}(g)\circ\mathcal{F}(f))
\stackrel{\mathit{Id}\bullet\varphi}{\Longrightarrow}\mathcal{F}(h)\circ\mathcal{F}(g\circ f)\stackrel{\varphi}{\Rightarrow}
\mathcal{F}(h\circ(g\circ f))
\]
is equal to the composition
\[
(\mathcal{F}(h)\circ\mathcal{F}(g))\circ\mathcal{F}(f)\stackrel{\varphi\bullet\mathit{Id}}{\Longrightarrow}\mathcal{F}(h\circ g)\circ\mathcal{F}(f)\stackrel{\varphi}{\Rightarrow}
\mathcal{F}((h\circ g)\circ f)\stackrel{\mathcal{F}(a)}{\Longrightarrow}\mathcal{F}(h\circ(g\circ f))\,.
\]}
\item{For any~$X\stackrel{f}{\to}Y$, the composition
\[
I_{\mathcal{F}(Y)}\circ \mathcal{F}(f)\stackrel{\varphi_Y\bullet\mathit{Id}}{\Longrightarrow}\mathcal{F}(I_Y)\circ \mathcal{F}(f)
\stackrel{\varphi}{\Rightarrow}\mathcal{F}(I_Y\circ f)\stackrel{\mathcal{F}(\ell)}{\Longrightarrow}\mathcal{F}(f)
\]
coincides with~$\ell_{\mathcal{F}(f)}$, and similarly for~$r$.}
\end{enumerate}

%%%%%

\subsection{A bicategory of cospans}
\label{sub:2-cospan}

Our goal is now to define a bicategory of Lagrangian cospans. To do so, we will first work in the more general setting of a category with pushouts.
We wish to emphasize that our resulting bicategory of cospans differs from the usual definition considered in the literature, where the~$2$-morphisms are usually taken to be morphisms of cospans (see e.g.~\cite{Ben}). On the other hand, a notion dual to the~$2$-morphisms that we consider was already studied by Morton~\cite{Morton} in another context (see also \cite{Reb}).

Throughout this section,~$\mathbf{C}$ is a category with pushouts in which we fix a pushout for each span.

The objects of our bicategory are the objects of~$\mathbf{C}$ and the~$1$-morphisms are the cospans in~$\mathbf{C}$, where the horizontal composition is given by our choice of a fixed pushout. It remains to define the~$2$-morphisms, the vertical composition, the associativity and identity isomorphisms, and what is left of the horizontal composition.
 A {\em~$2$-cospan in~$\mathbf{C}$} from~$H\stackrel{i_1}{\longrightarrow}T_1\stackrel{i_1'}{\longleftarrow}H'$ to~$H\stackrel{i_2}{\longrightarrow}T_2\stackrel{i_2'}{\longleftarrow}H'$ consists of a
cospan~$T_1 \stackrel{\alpha_1}{\longrightarrow}A\stackrel{\alpha_2}{\longleftarrow}T_2$ in~$\mathbf{C}$
for which the two following squares commute
\[
\xymatrix@R0.5cm{ 
& T_1\ar[d]^{\alpha_1}  & \\
H \ar[ur]^{i_1} \ar[dr]_{i_2} & A & \;H'\,.\ar[ul]_{i'_1} \ar[dl]^{i'_2} \\
& T_2 \ar[u]_{\alpha_2} & 
}
\]
Two such~$2$-cospans~$T_1 \stackrel{\alpha_1}{\longrightarrow}A\stackrel{\alpha_2}{\longleftarrow}T_2$ and~$T_1 \stackrel{\alpha_1'}{\longrightarrow}A'\stackrel{\alpha_2'}{\longleftarrow}T_2$ are said to be {\em isomorphic\/} if there is a~$\mathbf{C}$-isomorphism~$f \colon A \to A'$ such that~$f\alpha_1=\alpha_1'$
and~$f\alpha_2=\alpha_2'$. Abusing notation, we shall often denote the isomorphism class of such a~$2$-cospan by~$A\colon T_1\Rightarrow T_2$.
These will be the~$2$-morphisms in our~$2$-category.

Let us now proceed with the definition of the {\em vertical composition\/} of the~$2$-morphisms~$A\colon T_1\Rightarrow T_2$ and~$B\colon T_2\Rightarrow T_3$. It is best explained by the diagram
\[
\xymatrix@R0.5cm{
 & T_1\ar[d]_{\alpha_1}  & \\
 & A & \\
H \ar@/^1pc/[uur]^{i_1} \ar[r]^{i_2} & T_2 \ar[u]^{\alpha_2} & H'\ar[l]_{i_2'}\ar@/_1pc/[uul]_{i'_1}\\
 & \star & \\
H \ar@/_1pc/[ddr]_{i_3} \ar[r]^{i_2} & T_2 \ar[d]_{\beta_2} & H'\ar[l]_{i_2'} \ar@/^1pc/[ddl]^{i'_3} \\
 & B & \\
 & T_3\ar[u]^{\beta_3} & \\
} \ \
\xymatrix@R0.5cm{\\ \\ \\ \\ = \\ \\ \\}
\ \ 
\xymatrix@R0.5cm{\\ \\ 
& T_1\ar[d]^{v_A\alpha_1}  & \\
H \ar@/^1pc/[ur]^{i_1} \ar@/_1pc/[dr]_{i_3} & B \star A & H',\ar@/_1pc/[ul]_{i_1'} \ar@/^1pc/[dl]^{i_3'} \\
& T_3 \ar[u]_{v_B\beta_3} & \\
\\
\\
}
\]
where~$B \star A$ and~$v_A, v_B$ are given by the pushout diagram
\[
\xymatrix@R0.5cm{
& & B \star A & & & \\
& A \ar[ru]^{v_A} & & B \ar[ul]_{v_B} \\
T_1\ar[ru]^{\alpha_1} & & T_2 \ar[ru]^{\beta_2}\ar[lu]_{\alpha_2} & &T_3. \ar[lu]_{\beta_3}&}
\]
One can easily check that this indeed defines a~$2$-cospan.

\begin{remark}
\label{rem:vert}
In the special case where~$\alpha_2=\id_{T_2}$ (resp.~$\beta_2=\id_{T_2}$), this vertical
composition~$T_1\stackrel{v_A\alpha_1}{\longrightarrow}B\star A\stackrel{v_B\beta_3}{\longleftarrow}T_3$
is isomorphic to~$T_1\stackrel{\beta_2\alpha_1}{\longrightarrow}B\stackrel{\beta_3}{\longleftarrow}T_3$ (resp.~$T_1\stackrel{\alpha_1}{\longrightarrow}A\stackrel{\alpha_2\beta_3}{\longleftarrow}T_3$).
This is a direct consequence of Remark~\ref{rem:cospan}.
\end{remark}

On the level~$2$-morphisms, the {\em horizontal composition\/} of~$A\colon T_1\Rightarrow T_2$ and~$B\colon T_3\Rightarrow T_4$ is described by the diagram
\[
\xymatrix@R0.5cm{ & T_3 \ar[d]^{\beta_3} & \\
H' \ar[ur]^{i_3'} \ar[dr]_{i_4'} & B & H''\ar[ul]_{i''_3} \ar[dl]^{i''_4}  \\
& T_4 \ar[u]_{\beta_4} }
 \ 
\xymatrix@R0.5cm{ \\ \bullet \\ }
 \ 
\xymatrix@R0.5cm{ 
& T_1\ar[d]^{\alpha_1} & \\
H \ar[ur]^{i_1} \ar[dr]_{i_2} & A & H'\ar[ul]_{i'_1} \ar[dl]^{i'_2} \\
& T_2 \ar[u]_{\alpha_2} & 
}
 \
\xymatrix@R0.5cm{ \\ = \\ }
 \ 
\xymatrix@R0.5cm{ 
& T_3\circ T_1\ar[d]^{h_{31}}  & \\
H \ar@/^1pc/[ur]^{j_1i_1} \ar@/_1pc/[dr]_{j_2i_2} & B \bullet A & \;H''\,,\ar@/_1pc/[ul]_{j_3i_3''} \ar@/^1pc/[dl]^{j_4i_4''} \\
& T_4\circ T_2 \ar[u]_{h_{42}} & 
}
\]
where~$j_1,\dots,j_4$ are the maps that arise in the compositions~$T_3\circ T_1$ and~$T_4\circ T_2$ (see the diagrams below),~$B \bullet A$ is given by the pushout
\[
\xymatrix@R0.5cm{
& & B \bullet A & & & \\
& A \ar[ru]^{h_A} & & B \ar[ul]_{h_B} \\
H\ar[ru]^{\alpha_1i_1} & & H' \ar[ru]^{\beta_3i_3'}\ar[lu]_{\alpha_2i_2'} & &H''\,,\ar[lu]_{\beta_4i_4''}&}
\]
and the maps~$h_{31}$ and~$h_{42}$ are obtained as follows. Since~$h_A \alpha_1 i_1'=h_A \alpha_2 i_2'=h_B \beta_3 i_3'$
and~$h_A\alpha_2 i_2'=h_B\beta_3i_3'=h_B\beta_4i_4'$, the pushout diagrams
\begin{equation}
\label{equ:LegsForHori}
\xymatrix@R0.4cm{
& & B \bullet A & & & \\
& & T_3\circ T_1 \ar[u]^{h_{31}} & & & \\
& A \ar@/^1pc/[uur]^{h_A} &  & B \ar@/_1pc/[uul]_{h_B} &  & \\
& T_1 \ar[ruu]^{j_1} \ar[u]^{\alpha_1} & & T_3 \ar[uul]_{j_3} \ar[u]_{\beta_3} \\
 & & H' \ar[lu]_{i_1'}\ar[ru]^{i_3'} & &}
\xymatrix@R0.4cm{
& & B \bullet A & & & \\
& & T_4\circ T_2 \ar[u]^{h_{42}} & & & \\
& A \ar@/^1pc/[uur]^{h_A} &  & B \ar@/_1pc/[uul]_{h_B} &  & \\
& T_2 \ar[ruu]^{j_2} \ar[u]^{\alpha_2} & & T_4 \ar[uul]_{j_4} \ar[u]_{\beta_4} \\
 & & H' \ar[lu]_{i_2'}\ar[ru]^{i_4'} & &}
\end{equation}
provide maps~$h_{31}$ and~$h_{42}$ which turn~$T_3 \circ T_1 \stackrel{h_{31}}{\longrightarrow}B \bullet A\stackrel{h_{42}}{\longleftarrow}T_4 \circ T_2$ into
a~$2$-cospan, as one easily checks. 

The proof of the following theorem  can be found in \cite{Reb} in the dual context of spans. It applies without change to the present setting.

\begin{theorem}
\label{thm:2cat}
Let~$\mathbf{C}$ be a category with pushouts in which a choice of pushout is fixed for each span.
Objects in~$\mathbf{C}$, as objects, cospans in~$\mathbf{C}$, as morphisms, and isomorphism classes of 2-cospans in~$\mathbf{C}$, as 2-morphisms, form a bicategory.
\end{theorem}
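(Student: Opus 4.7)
The plan is to verify each piece of structure and each axiom in the definition of a bicategory given in Subsection~\ref{sub:2-cat}, in every case reducing the claim to the universal property of a pushout and to the standard fact that the composite of two pushout squares along a common edge is again a pushout square. Throughout, passing to isomorphism classes of 2-cospans is essential, since the pushouts involved in the vertical and horizontal compositions are only determined up to canonical isomorphism, and this is what allows the structural 2-isomorphisms to become genuine equalities when convenient.

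First I would verify that, for each pair of objects $(H,H')$, the 1-morphisms from $H$ to $H'$ together with isomorphism classes of 2-cospans between them form a category under vertical composition. The identity 2-morphism on a cospan $T$ is represented by $T\stackrel{\id}{\longrightarrow}T\stackrel{\id}{\longleftarrow}T$; Remark~\ref{rem:cospan} applied at the level of 2-cospans (using an identity leg) shows that this acts as a two-sided unit for $\star$ up to canonical isomorphism, and hence strictly on isomorphism classes. Associativity of $\star$ on isomorphism classes follows from the fact that the two iterated pushouts $(C\star B)\star A$ and $C\star(B\star A)$ are both colimits of the same zig-zag diagram $T_1\to A\leftarrow T_2\to B\leftarrow T_3\to C\leftarrow T_4$, hence canonically isomorphic.

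I would then check that horizontal composition defines a functor $\mathcal{C}(X,Y)\times\mathcal{C}(Y,Z)\to\mathcal{C}(X,Z)$. Well-definedness on isomorphism classes and the identity axiom $\mathit{Id}_g\bullet\mathit{Id}_f=\mathit{Id}_{g\circ f}$ are immediate from the universal property of $B\bullet A$. The interchange law $(\delta\bullet\beta)\star(\gamma\bullet\alpha)=(\delta\star\gamma)\bullet(\beta\star\alpha)$ requires identifying both sides with a common iterated pushout of the four 2-cospans $\alpha,\beta,\gamma,\delta$; the unique comparison map supplied by the universal property is an isomorphism of 2-cospans, so both sides coincide in the present category. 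With this in hand, the associator $a_{fgh}$ is defined as the canonical comparison 2-cospan between the two iterated pushouts $(h\circ g)\circ f$ and $h\circ(g\circ f)$, while the unitors $\ell_f$ and $r_f$ are obtained from Remark~\ref{rem:cospan}, which identifies $I_{H'}\circ f$ and $f\circ I_H$ with cospans canonically isomorphic to $f$. Naturality of $a,\ell,r$ in each argument is a direct consequence of the uniqueness clause in the universal property of the relevant pushouts.

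The main obstacle will be verifying the pentagon and triangle coherence axioms. Both are handled by the same strategy: the source and target of each required equality of 2-morphisms are colimits of a common diagram indexed by the underlying sequence of cospans, and the uniqueness part of the universal property then forces the two composites of structural 2-morphisms to coincide. This is precisely the argument carried out in the dual case of spans in~\cite{Reb}, and it transposes verbatim to the present cospan setting.
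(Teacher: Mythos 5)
Your proposal is correct and matches the paper's approach: the paper simply observes that the verification of the bicategory axioms is carried out in~\cite{Reb} in the dual context of spans and transposes without change, and the universal-property/iterated-pushout arguments you sketch (for vertical and horizontal composition, interchange, associators, unitors, and coherence) are exactly that verification, dualized. Working with isomorphism classes of 2-cospans to turn the canonical comparison isomorphisms into strict equalities is also precisely the point the paper relies on.
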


Note that strictly speaking, this bicategory depends on the choice of pushouts. However,
another choice would give a bicategory isomorphic in an obvious sense (see e.g.~\cite[p.22]{Ben}).

The special case where~$\mathbf{C}$ is the category of~$\Lambda$-modules and the morphisms are Lagrangian cospans yields the following corollary.

\begin{corollary}
\label{cor:2cat}
Fix a pushout for each span of~$\Lambda$-modules. Hermitian~$\Lambda$-modules, as objects, Lagrangian cospans, as morphisms, and isomorphism classes of~$2$-cospans,
as~$2$-morphisms, form a bicategory.\qed
\end{corollary}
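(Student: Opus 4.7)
The plan is to deduce Corollary~\ref{cor:2cat} directly from Theorem~\ref{thm:2cat} applied to $\mathbf{C}$ = the category of $\Lambda$-modules. By the discussion after Lemma~\ref{lemma:pushout}, $\mathbf{C}$ has pushouts (given by cokernels of $(-i_1,i_2)\colon H\to T_1\oplus T_2$), so fixing a pushout for each span produces a bicategory $\widetilde{\mathcal{C}}$ whose objects are $\Lambda$-modules, $1$-morphisms are cospans, and $2$-morphisms are isomorphism classes of $2$-cospans. The strategy is then to check that the full sub-bicategory obtained by restricting objects to Hermitian $\Lambda$-modules and $1$-morphisms to Lagrangian cospans inherits all the bicategorical structure.

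First I would verify that the restriction is well-defined at the level of the underlying $1$-categorical data. Proposition~\ref{prop:1Lagr} already establishes that Hermitian $\Lambda$-modules together with isomorphism classes of Lagrangian cospans form a category $\mathbf{L}_\Lambda$: the identity cospan $I_H$ is Lagrangian because $\overline{N_{I_H}}=\Delta_H$, and the horizontal composite of two Lagrangian cospans is Lagrangian by Lemma~\ref{lem:composition} combined with Theorem~\ref{thm:Lagr}. Thus the horizontal composition of $1$-morphisms in $\widetilde{\mathcal{C}}$ restricts to the Lagrangian subclass.

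Next I would observe that no further condition is imposed on $2$-morphisms: a $2$-cospan between two Lagrangian $1$-cospans is just a $2$-cospan in $\widetilde{\mathcal{C}}$. Consequently, the vertical composition $B\star A$ (formed as the pushout $T_1\to A\to B\star A\leftarrow B\leftarrow T_3$) and the horizontal composition $B\bullet A$ (formed via the pushout along $H'$) produce $2$-cospans whose source and target $1$-morphisms are, respectively, $T_1\Rightarrow T_3$ and $T_3\circ T_1\Rightarrow T_4\circ T_2$; the latter are Lagrangian by the previous paragraph. Hence both compositions stay inside the restricted sub-bicategory. Similarly, the associativity isomorphisms $a_{fgh}$ and the left/right unit isomorphisms $\ell_f,r_f$ supplied by Theorem~\ref{thm:2cat} are $2$-cospans between Lagrangian $1$-cospans and satisfy the coherence axioms because they already did so in $\widetilde{\mathcal{C}}$.

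The only point that requires attention—and is really the main obstacle, though a mild one—is the functoriality of horizontal composition of $2$-morphisms, i.e.\ the interchange law and the identity $\mathit{Id}_f\bullet\mathit{Id}_g=\mathit{Id}_{g\circ f}$. These are inherited from $\widetilde{\mathcal{C}}$ once we confirm that all the pushout diagrams involved (notably the diagrams~\eqref{equ:LegsForHori} defining $h_{31}$ and $h_{42}$) produce, in the Lagrangian setting, the same structure maps as in the unrestricted setting. Since the diagrams are constructed entirely from the underlying $\Lambda$-module morphisms and make no use of the Hermitian forms, this compatibility is immediate, and the corollary follows from Theorem~\ref{thm:2cat} by restriction.
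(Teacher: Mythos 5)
Your proposal is correct and follows essentially the same route as the paper, which treats the corollary as an immediate specialization of Theorem~\ref{thm:2cat} to the category of $\Lambda$-modules (which has pushouts by Lemma~\ref{lemma:pushout}), restricted using the facts from Proposition~\ref{prop:1Lagr} that identity cospans are Lagrangian and Lagrangian cospans are closed under composition, with no extra condition imposed on the $2$-cospans.
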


We shall call it ``the'' {\em bicategory of Lagrangian cospans\/}.

%%%%%%%%%%%%%%%%%%%%%%%%%%%%%%%

\section{The Burau-Alexander 2-functor}
\label{sec:2functor}

The aim of this section is to define a weak~$2$-functor~$\mathcal{B}$ from the bicategory of oriented tangles to the bicategory of Lagrangian cospans
where~$\Lambda$ is the ring~$\Z[t^{\pm 1}]$ of Laurent polynomials in one variable with integer coefficients. We proceed in two steps:  in Subsection~\ref{sub:tangles}, we recall the definition of the category of oriented tangles, and construct a functor~$\mathcal{B} \colon \textbf{Tangles} \to \textbf{L}_\Lambda$.
In Subsections~\ref{sub:2tangles} and~\ref{sub:2mor}, we study the bicategory of tangles, and convert~$\mathcal{B}$ into a weak~$2$-functor with values in the bicategory of Lagrangian cospans.

%%%%%

\subsection{The functor~$\mathcal{B}$ on objects and~$1$-morphisms}
\label{sub:tangles}

We start by recalling the definition of the category of oriented tangles.
Let~$D^2$ be the closed unit disk in~$\R^2$. Given a non-negative integer~$n$, let~$x_j$ be the point~$((2j-n-1)/n,0)$
in~$D^2$, for~$j=1,\dots,n$. Let~$\varepsilon$ and~$\varepsilon'$ be sequences of~$\pm 1$'s of respective length~$n$ and~$n'$.
An~{\em$(\varepsilon,\varepsilon')$-tangle\/} is a pair consisting of the cylinder~$D^2\times [0,1]$ and an oriented smooth~$1$-submanifold~$\tau$ whose oriented boundary
is~$\sum_{j=1}^{n'}\varepsilon'_j(x'_j,1)-\sum_{j=1}^{n}\varepsilon_i(x_j,0)$. Note that a~$(\emptyset,\emptyset)$-tangle is nothing but an oriented link.
Two~$(\varepsilon,\varepsilon')$-tangles~$\tau_1$ and~$\tau_2$ are {\em isotopic\/} if there exists an isotopy~$h_t$ of~$D^2\times [0,1]$, keeping~$D^2\times\{0,1\}$ fixed, such
that~$h_1\vert_{\tau_1}\colon\tau_1\simeq\tau_2$ is an orientation-preserving homeomorphism. We shall denote by~$I_\varepsilon$ the isotopy class of the
trivial~$(\varepsilon,\varepsilon)$-tangle~$(D^2,\{x_1,\dots,x_n\})\times [0,1]$. 

Given an~$(\varepsilon,\varepsilon')$-tangle~$\tau_1$ and an~$(\varepsilon',\varepsilon'')$-tangle~$\tau_2$, their {\em composition\/} is
the~$(\varepsilon,\varepsilon'')$-tangle~$\tau_2\circ\tau_1$ obtained by gluing the two cylinders along the disk corresponding to~$\varepsilon'$, smoothing it if needed,
and shrinking the length of the resulting cylinder by a factor~$2$ (see Figure \ref{fig:composition}).
Clearly, the composition induces a composition on the isotopy classes of tangles, which is associative and admits~$I_\varepsilon$ as a~$2$-sided unit. Therefore,
the sequences of~$\pm 1$'s, as objects, and the isotopy classes of tangles, as morphisms, form a category denoted by~$\mathbf{Tangles}$ and called the
{\em category of oriented tangles\/}.

\begin{figure}[tb]
\labellist\small\hair 2.5pt
\pinlabel {$\varepsilon$} at 17 763
\pinlabel {$\varepsilon'$} at 118 767
\pinlabel {$\varepsilon'$} at 160 767
\pinlabel {$\varepsilon''$} at 264 767
\pinlabel {$\varepsilon$} at 360 763
\pinlabel {$\varepsilon''$} at 464 767
\pinlabel {$\tau_1$} at 65 815
\pinlabel {$\tau_2$} at 210 815
\pinlabel {$\tau_2\circ\tau_1$} at 410 815
\endlabellist
\centering
\includegraphics[width=0.6\textwidth]{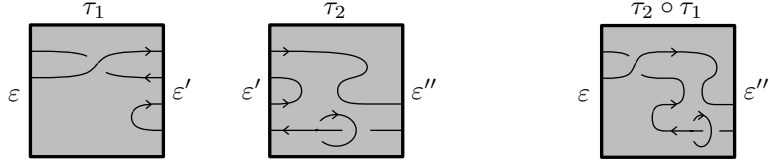}
\caption{An~$(\varepsilon,\varepsilon')$-tangle~$\tau_1$ with~$\varepsilon=(+1,-1)$ and~$\varepsilon'=(-1,+1,-1,+1)$, an~$(\varepsilon',\varepsilon'')$-tangle~$\tau_2$
with~$\varepsilon''=(-1,+1)$, and their composition, the~$(\varepsilon,\varepsilon'')$-tangle~$\tau_2\circ\tau_1$.}
\label{fig:composition}
\end{figure}

Recall that a tangle~$\tau\subset D^2 \times [0,1]$ is called an {\em oriented braid\/} if every component of~$\tau$ is strictly increasing or strictly decreasing with respect to the projection onto~$[0,1]$. The finite sequences of~$\pm 1$'s as objects, and the isotopy classes of oriented braids, as morphisms, form a
subcategory~$\mathbf{Braids}$ of~$\mathbf{Tangles}$, which is nothing but its core. Finally a tangle~$\tau\subset D^2\times [0,1]$ is called an {\em oriented string link\/}
if every component of~$\tau$ joins~$D^2 \times\lbrace 0 \rbrace$ and~$D^2 \times \lbrace 1 \rbrace$. Isotopy classes of oriented string links are the morphisms of a
category~$\mathbf{Strings}$ which satisfies
\[
\mathbf{Braids} \subset \mathbf{Strings} \subset \mathbf{Tangles}\,,
\]
where all the inclusions denote embeddings of categories.

\medbreak

We are now ready to define our Burau-Alexander functor~$\mathcal{B}\colon\mathbf{Tangles}\to\mathbf{L}_\Lambda$. We start by defining it on objects, following the construction
of~\cite{CT}. Denote by~$\mathcal{N}(\lbrace x_1,\dots,x_n\rbrace)$ an open tubular neighborhood of~$\lbrace x_1,\dots, x_n \rbrace$ in~$D^2\subset\R^2$, and let~$S^2$ be the~$2$-sphere obtained by the one-point compactification of~$\R^2$. Given a sequence~$\varepsilon=(\varepsilon_1,\dots,\varepsilon_n)$ of~$\pm 1$,
set~$\ell_\varepsilon=\sum_{i=1}^n \varepsilon_i$ and endow the compact surface
\[
D_\varepsilon = \left\{\begin{array}{lr}
        D^2\setminus\mathcal{N}(\lbrace x_1,\dots,x_n\rbrace) & \text{if } \ell_\varepsilon\neq 0\\
        S^2\setminus\mathcal{N}(\lbrace x_1,\dots,x_n\rbrace) & \text{if } \ell_\varepsilon=0
        \end{array}\right.
\]
with an orientation (pictured counterclockwise), a base point~$z$, and the generating family~$\lbrace e_1,\dots, e_n \rbrace$ of~$\pi_1(D_\varepsilon,z)$, where~$e_i$ is a simple loop turning once around~$x_i$ counterclockwise if~$\varepsilon_i=+1$, clockwise if~$\varepsilon_i=-1$. The same space with the opposite orientation will be denoted by~$-D_\varepsilon$.

The natural epimomorphism~$H_1(D_\varepsilon)\to\mathbb{Z}$, given by~$e_j \mapsto 1$
induces an infinite cyclic covering~$\widehat{D}_\varepsilon \rightarrow D_\varepsilon$ whose homology is endowed with a structure of module over~$\Lambda=\Z[t^{\pm 1}]$.
If~$\ell_\varepsilon\neq 0$, then~$D_\varepsilon$ obviously retracts by deformation on the wedge of~$n$ circles representing the generators~$e_1,\dots,e_n$ of~$\pi_1(D_\varepsilon,z)$, and one can check that~$H_1(\widehat{D}_\varepsilon)$ is a free~$\Lambda$-module of rank~$n-1$. (It is free of rank~$n-2$ if~$\ell_\varepsilon$ vanishes.)
If~$\langle\ , \ \rangle\colon H_1(\widehat{D}_\varepsilon)\times H_1(\widehat{D}_\varepsilon)\rightarrow\Z$ denotes the skew-symmetric intersection form obtained by lifting the orientation of~$D_\varepsilon$
to~$\widehat{D}_\varepsilon$, then the formula
\[
\omega_\varepsilon(x,y)=\sum_{k \in \mathbb{Z}} \langle t^kx,y \rangle t^{-k}
\]
defines a skew-Hermitian~$\Lambda$-valued pairing on~$H_1(\widehat{D}_\varepsilon)$ which is non-degenerate by~\cite[Lemma 3.2]{CT}.
(This is the only reason for considering~$S^2$ instead of~$D^2$ when~$\ell_\varepsilon$ vanishes.) Therefore, following the terminology of
subsection~\ref{sub:Lagr},~$\mathcal{B}(\varepsilon):=(H_1(\widehat{D}_\varepsilon),\omega_\varepsilon)$ is a free Hermitian~$\Lambda$-module for any object~$\varepsilon$ of the category of oriented tangles. Note that this coincides with the definition of the Lagrangian functor~$\mathcal{F}\colon\mathbf{Tangles}\to\mathbf{Lagr}_\Lambda$ of~\cite{CT}
at the level of objects.

Let us now turn to morphisms. First note that the existence of an~$(\varepsilon,\varepsilon')$-tangle~$\tau\subset D^2 \times [0,1]$ implies
that~$\ell_\varepsilon=\ell_{\varepsilon'}$. Denote by~$\mathcal{N}(\tau)$ an open tubular neighborhood of~$\tau$ in~$D^2 \times [0,1]$. We shall orient the exterior
\[
X_\tau = \left\{\begin{array}{lr}
        (D^2 \times [0,1]) \setminus \mathcal{N}(\tau) & \text{if } \ell_\varepsilon\neq 0\\
        (S^2 \times [0,1]) \setminus \mathcal{N}(\tau) & \text{if } \ell_\varepsilon=0
        \end{array}\right.
\]
of~$\tau$ so that the induced orientation on~$\partial X_\tau$ extends the orientation on the space~$(-D_\varepsilon) \sqcup D_{\varepsilon'}$. Clearly, the abelian group~$H_1(X_\tau)$ is generated by the oriented meridians of the connected components of~$\tau$. The homomorphism~$H_1(X_\tau)\rightarrow \mathbb{Z}$ mapping these meridians to~$1$ extends the previously defined homomorphisms~$H_1(D_\varepsilon) \rightarrow \mathbb{Z}$
and~$H_1(D_{\varepsilon'})\rightarrow \mathbb{Z}$. It determines an infinite cyclic covering~$\widehat{X}_\tau \rightarrow X_\tau$ whose homology is  endowed with a
structure of module over~$\Lambda$.

Let~$i_\tau\colon H_1(\widehat{D}_\varepsilon) \rightarrow H_1(\widehat{X}_\tau)$ and~$i_\tau'\colon H_1(\widehat{D}_{\varepsilon'}) \rightarrow H_1(\widehat{X}_\tau)$ be the homomorphisms induced by the inclusions of~$\widehat{D}_\varepsilon$ and~$\widehat{D}_{\varepsilon'}$ into~$\widehat{X}_\tau$.
Since~$\mathcal{F}(\tau)=\overline{\mathit{Ker}{-i_\tau\choose\phantom{-}i_\tau'}}$ is a Lagrangian submodule of~$(-H_1(\widehat{D}_\varepsilon))\oplus H_1(\widehat{D}_\varepsilon')$~\cite[Lemma~$3.3$]{CT}, it follows that~$H_1(\widehat{D}_\varepsilon) \stackrel{i_\tau}\longrightarrow H_1(\widehat{X}_\tau) \stackrel{i_\tau'}\longleftarrow H_1(\widehat{D}_{\varepsilon'})$ is a Lagrangian cospan for any~$1$-morphism~$\tau$ in the category of oriented tangles. Note that the equality above is nothing but the definition of the Lagrangian
functor~$\mathcal{F}$ of~\cite{CT} at the level of morphisms.

\begin{theorem}
\label{thm:functor}
For any sequence~$\varepsilon$ of~$\pm 1$'s, set~$\mathcal{B}(\varepsilon)=(H_1(\widehat{D}_\varepsilon),\omega_\varepsilon)$ and for any isotopy class~$\tau$ of tangles,
let~$\mathcal{B}(\tau)$ denote the isomorphism class of the Lagrangian cospan~$H_1(\widehat{D}_\varepsilon) \stackrel{i_\tau}\longrightarrow H_1(\widehat{X}_\tau) \stackrel{i_\tau'}\longleftarrow H_1(\widehat{D}_{\varepsilon'})$. This defines a functor~$\mathcal{B}\colon\mathbf{Tangles}\to\mathbf{L}_\Lambda$ which fits in the commutative diagram 
\[
\xymatrix@R0.5cm{ 
\mathbf{Braids} \ar[d] \ar[r] \ar@/_3pc/[dd]_\rho & \mathbf{String} \ar[d] \ar[r] & \mathbf{Tangles} \ar[d]^{\mathcal{B}} \ar@/^2.5pc/[dd]^{\mathcal{F}}\ \\
\mathit{core}(\mathbf{L}_\Lambda) \ar[d]_\simeq\ar[r] & \mathit{core}(\mathbf{L}_\Lambda)^0 \ar[d]\ar[r] & \mathbf{L}_\Lambda \ar[d]^F \\ 
\mathbf{U}_\Lambda \ar[r]^{- \otimes Q} \ar@/_1.5pc/[rr]_\Gamma & \mathbf{U}^0_\Lambda \ar[r]^{\Gamma^0} & \mathbf{Lagr}_\Lambda,}
\]
where the left-most vertical arrow is the Burau functor, the horizontal arrows are the embeddings of categories described in Subsections~\ref{sub:Lagr}
and~\ref{sub:tangles}, and~$F$ is the full functor defined in Subsection~\ref{sub:cospan} (recall diagram~(\ref{equ:summary})).
Furthermore, if~$\tau$ is an oriented link, then~$\mathcal{B}(\tau)$ is nothing but its Alexander module.
\end{theorem}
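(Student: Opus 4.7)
The plan is to proceed in four blocks: well-definedness on isotopy classes, functoriality, commutativity of the big diagram, and the statements about Burau and Alexander.

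\textbf{Well-definedness and identity.} An isotopy $h_t$ between two $(\varepsilon,\varepsilon')$-tangles fixes $D^2\times\{0,1\}$ and carries $\tau_1$ onto $\tau_2$, hence restricts to an orientation-preserving homeomorphism $X_{\tau_1}\to X_{\tau_2}$ which is the identity on $(-D_\varepsilon)\sqcup D_{\varepsilon'}$. Since this homeomorphism is compatible with the augmentation to $\Z$, it lifts to a $\Z$-equivariant homeomorphism of the infinite cyclic covers and therefore induces an isomorphism of cospans. For the trivial tangle $I_\varepsilon$, the cylinder $X_{I_\varepsilon}$ deformation retracts onto $D_\varepsilon\times\{0\}$ along the product structure; both inclusion-induced maps $H_1(\widehat{D}_\varepsilon)\to H_1(\widehat{X}_{I_\varepsilon})$ are therefore isomorphisms, so $\mathcal{B}(I_\varepsilon)$ is isomorphic to the identity cospan $I_{\mathcal{B}(\varepsilon)}$.

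\textbf{Functoriality under composition.} This is the main technical step. Given composable tangles $\tau_1$ and $\tau_2$, rescaling realizes $X_{\tau_2\circ\tau_1}$ as $X_{\tau_1}\cup X_{\tau_2}$, glued along the copy of $D_{\varepsilon'}$ that sits in both exteriors. The augmentations to $\Z$ agree on the overlap, so the infinite cyclic covers glue as well: $\widehat{X}_{\tau_2\circ\tau_1}=\widehat{X}_{\tau_1}\cup\widehat{X}_{\tau_2}$ along $\widehat{D}_{\varepsilon'}$. The Mayer--Vietoris sequence with $\Lambda$-coefficients yields
\[
H_1(\widehat{D}_{\varepsilon'})\xrightarrow{\,(-i_{\tau_1}',\,i_{\tau_2}')\,} H_1(\widehat{X}_{\tau_1})\oplus H_1(\widehat{X}_{\tau_2})\longrightarrow H_1(\widehat{X}_{\tau_2\circ\tau_1})\longrightarrow H_0(\widehat{D}_{\varepsilon'})\longrightarrow H_0(\widehat{X}_{\tau_1})\oplus H_0(\widehat{X}_{\tau_2}).
\]
I will check that the last map is injective by verifying that the covers $\widehat{D}_{\varepsilon'}$, $\widehat{X}_{\tau_i}$ are connected (which reduces to the surjectivity of the augmentation on each $\pi_1$, a standard fact since the meridians are sent to $1$). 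By Lemma~\ref{lemma:pushout}, the middle cokernel is precisely the pushout of $H_1(\widehat{X}_{\tau_1})\leftarrow H_1(\widehat{D}_{\varepsilon'})\to H_1(\widehat{X}_{\tau_2})$, which is the cospan $\mathcal{B}(\tau_2)\circ\mathcal{B}(\tau_1)$ in $\mathbf{L}_\Lambda$. The two inclusion-induced legs into $H_1(\widehat{X}_{\tau_2\circ\tau_1})$ match the legs of the pushout by naturality, so $\mathcal{B}(\tau_2\circ\tau_1)=\mathcal{B}(\tau_2)\circ\mathcal{B}(\tau_1)$. I expect this Mayer--Vietoris/pushout identification to be the main obstacle, chiefly because the connectedness and exactness arguments must be carried out simultaneously for both the $\ell_\varepsilon\neq 0$ and $\ell_\varepsilon=0$ cases.

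\textbf{Commutativity of the diagram.} The bottom row is diagram~(\ref{equ:summary}). The right square, $F\circ\mathcal{B}=\mathcal{F}$, is immediate from the definitions: $F$ sends a cospan to $\overline{N_T}=\overline{\ker(-i,\,i')}$, which coincides with the Lagrangian relation defined by the first-named author and Turaev in~\cite{CT}. For the middle and left columns I must show that a braid maps to an invertible cospan and a string link to a rationally invertible one. For a braid $\tau$, flowing along the strands gives a deformation retraction of $X_\tau$ onto $D_\varepsilon\times\{0\}$ (and onto $D_{\varepsilon'}\times\{1\}$); hence both legs $i_\tau,i_\tau'$ are $\Lambda$-isomorphisms, so $\mathcal{B}(\tau)\in\mathit{core}(\mathbf{L}_\Lambda)$. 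For a string link $\tau$, the corresponding statement after inverting $t$ is the classical fact that $i_\tau\otimes Q$ and $i_\tau'\otimes Q$ are $Q$-isomorphisms (see~\cite{CT}), placing $\mathcal{B}(\tau)$ in $\mathit{core}(\mathbf{L}_\Lambda)^0$.

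\textbf{Burau and Alexander.} Tracing the definitions, the equivalence $\mathit{core}(\mathbf{L}_\Lambda)\xrightarrow{\simeq}\mathbf{U}_\Lambda$ of Proposition~\ref{prop:core} sends $\mathcal{B}(\tau)$ to $i_\tau'^{-1}\circ i_\tau\colon H_1(\widehat{D}_\varepsilon)\to H_1(\widehat{D}_{\varepsilon'})$. For a braid, this is precisely the automorphism appearing in the homological definition of the reduced Burau representation~\cite{KLW}, so the left-most column recovers $\rho$. Finally, if $\tau$ is an oriented link then $\varepsilon=\varepsilon'=\emptyset$, the module $H_1(\widehat{D}_\emptyset)=0$, and the cospan $\mathcal{B}(\tau)$ reduces to the single $\Lambda$-module $H_1(\widehat{X}_\tau)$, which is by definition the Alexander module of $\tau$.
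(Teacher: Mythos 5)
Your argument is essentially the paper's own proof: you decompose the exterior of $\tau_2\circ\tau_1$ along $\widehat{D}_{\varepsilon'}$, combine the Mayer--Vietoris sequence with $\Lambda$-coefficients with Lemma~\ref{lemma:pushout} to identify $H_1(\widehat{X}_{\tau_2\circ\tau_1})$ with the pushout of the two cospans, and treat the remaining points (identity tangle by retraction, braids by flowing along strands, string links via~\cite{KLW}, $F\circ\mathcal{B}=\mathcal{F}$ by definition, links giving the Alexander module) exactly as the authors do. Your added care with the $H_0$-tail of the Mayer--Vietoris sequence is a welcome refinement of the paper's bare assertion that the sequence ends in $0$, though note that the connectedness argument you invoke does not apply verbatim when $\varepsilon'=\emptyset$ (there are no meridians in $D_\emptyset$), a degenerate case which the paper's proof also passes over in silence.
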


\begin{proof}
For any object~$\varepsilon$, the cospan associated to the identity tangle~$I_\varepsilon$ is canonically isomorphic to the identity
cospan~$I_{\mathcal{B}(\varepsilon)}$. Let us now check that
given~$\tau_1 \in T(\varepsilon,\varepsilon')$ and~$\tau_2 \in T(\varepsilon',\varepsilon'')$, we have the
equality~$\mathcal{B}(\tau_2\circ\tau_1)=\mathcal{B}(\tau_2)\circ\mathcal{B}(\tau_1)$. Let~$H_1(\widehat{D}_\varepsilon) \stackrel{i_1}\rightarrow H_1(\widehat{X}_{\tau_1}) \stackrel{i_1'}\leftarrow H_1(\widehat{D}_{\varepsilon'})$ and~$H_1(\widehat{D}_{\varepsilon'}) \stackrel{i_2'}\rightarrow H_1(\widehat{X}_{\tau_2}) \stackrel{i_2''}\leftarrow H_1(\widehat{D}_{\varepsilon''})$ be the Lagrangian cospans arising from~$\tau_1$ and~$\tau_2$. We must show that~$H_1(\widehat{D}_\varepsilon) \stackrel{k_1i_1}{\longrightarrow} H_1(\widehat{X}_{\tau_3 \circ \tau_1}) \stackrel{k_2i_2''}\longleftarrow H_1(\widehat{D}_{\varepsilon''})$ is isomorphic to the composition~$H_1(\widehat{D}_\varepsilon) \stackrel{j_1i_1}{\longrightarrow} H_1(\widehat{X}_{\tau_3})\circ H_1(\widehat{X}_{\tau_1}) \stackrel{j_2i_2''}\longleftarrow H_1(\widehat{D}_{\varepsilon''})$, where~$k_1,k_2$ are the inclusion induced maps and~$j_1,j_2$ are maps resulting from any representative of the pushout~$H_1(\widehat{X}_{\tau_3})\circ H_1(\widehat{X}_{\tau_1})$.
Observe that~$\widehat{X}_{\tau_2\circ\tau_1}$ decomposes as the union of~$\widehat{X}_{\tau_1}$ and~$\widehat{X}_{\tau_2}$ glued along~$\widehat{D}_{\varepsilon'}$. Therefore, the associated Mayer-Vietoris exact sequence
\[
\xymatrix@R0.5cm{
H_1(\widehat{D}_{\varepsilon'})\ar[r]^{\!\!\!\!\!\!\!\!\!\!\!\!\!\!\!\!\!\!\!(-i_1',i'_2)}&H_1(\widehat{X}_{\tau_1})\oplus H_1(\widehat{X}_{\tau_2})\ar[r]^{\;\;\;\;\;\;\;\;\;\;{k_1\choose k_2}}&H_1(\widehat{X}_{\tau_2\circ\tau_1})\ar[r]& 0
}
\]
together with Lemma~\ref{lemma:pushout} imply that~$H_1(\widehat{X}_{\tau_1}) \stackrel{k_1}{\longrightarrow} H_1(\widehat{X}_{\tau_2 \circ \tau_1}) \stackrel{k_2}\longleftarrow H_1(\widehat{X}_{\tau_2})$ is a representative of the pushout~$H_1(\widehat{X}_{\tau_1}) \circ H_1(\widehat{X}_{\tau_2})$. The claim follows.

Then, observe that the Lagrangian functor~$\mathcal{F}$ is by definition the composition of the functors~$\mathcal{B}\colon\mathbf{Tangles}\to\mathbf{L}_\Lambda$
and~$F\colon\mathbf{L}_\Lambda\to\mathbf{Lagr}_\Lambda$. Also, if~$\tau$ is an oriented string link, then~$\mathcal{B}(\tau)$ is a rationally invertible cospan
by~\cite[Lemma~2.1]{KLW}, and thus belongs to~$\mathit{core}(\mathbf{L}_\Lambda)^0$ by definition. If~$\tau$ is an oriented braid on the other hand, then~$\mathcal{B}(\tau)$
is obviously an invertible cospan, and therefore belongs to~$\mathit{core}(\mathbf{L}_\Lambda)$ by Proposition~\ref{prop:core}.
Finally, if~$\tau$ is a~$(\emptyset,\emptyset)$-tangle, that is, an oriented link~$L$, then the associated Lagrangian
cospan is given by~$0\to H_1(\widehat X_L)\leftarrow 0$, with~$X_L$ the complement of~$L$ in the~$3$-ball. A straightforward Mayer-Vietoris argument shows that considering~$L$
in the~$3$-ball or in the~$3$-sphere does not change the Alexander module, and the proof is completed.
\end{proof}

%%%%%

\subsection{The bicategory of tangles}
\label{sub:2tangles}

The aim is now to convert~$\mathcal{B}$ to a weak~$2$-functor. To do so, we first need to understand how tangles form a (possibly weak)~$2$-category. Once this is done, we will switch from the category~$\textbf{L}_\Lambda$ to the bicategory of Lagrangian cospans and define the
weak~$2$-functor in subsection~\ref{sub:2mor}.

One might think that tangles produce a~$2$-category in a straightforward way~\cite{Fischer}: simply define the objects and~$1$-morphisms as in~$\mathbf{Tangles}$, and the~$2$-morphisms as isotopy classes of oriented surfaces in~$D^2\times [0,1]\times [0,1]$. However, the corresponding vertical composition is not well-defined: indeed, one needs to paste two surfaces along
isotopic tangles, and since the space of tangles isotopic to a fixed one is not necessarily simply-connected, different choices of isotopies can lead to different surfaces.

There are a couple of ways to circumvent this difficulty. One of them is to restrict the space of tangles whose isotopy classes form the~$1$-morphisms, so that the corresponding space of isotopic tangles has trivial fundamental group. Such a construction was given by Kharlamov and Turaev in~\cite{KT} (see also~\cite{BL1}): they considered the class of so-called
{\em generic tangles\/}, and proved that the space of generic tangles isotopic to a fixed one (through generic tangles) is simply-connected, thus obtaining a strict~$2$-category. However, it is more natural in our setting
to take the following alternative approach: define~$1$-morphisms as oriented tangles, and consider isotopies bewteen tangles as part of the ``higher structure''.

\begin{figure}[tb]
\labellist\small\hair 2.5pt
\pinlabel {$\varepsilon$} at 45 530
\pinlabel {$\varepsilon'$} at 556 530
\pinlabel {$\tau_1$} at 303 568
\pinlabel {$\tau_2$} at 350 50
\pinlabel {$\Sigma$} at 330 340
\endlabellist
\centering
\includegraphics[width=0.25\textwidth]{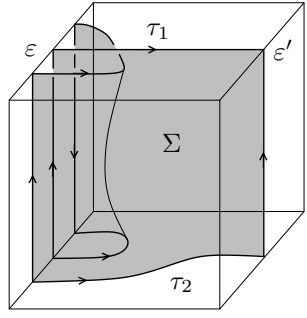}
\caption{A cobordism~$\Sigma\subset D^2\times[0,1]\times[0,1]$ between two~$(\varepsilon,\varepsilon')$-tangles~$\tau_1$ and~$\tau_2$, with~$\varepsilon=(+1,+1,-1)$
and~$\varepsilon'=(+1)$.}
\label{fig:cobordism}
\end{figure}

Let us be more precise. The objects of this bicategory are sequences~$\varepsilon$ of~$\pm 1$'s, while the~$1$-morphisms from~$\varepsilon$ to~$\varepsilon'$ are
the~$(\varepsilon,\varepsilon')$-tangles in~$D^2\times[0,1]$ that are trivial near the top and bottom of the cylinder. (This is to ensure that the composition of two tangles remains a smooth~$1$-submanifold.)

Given two~$(\varepsilon,\varepsilon')$-tangles~$\tau_1$ and~$\tau_2$, a {\em~$(\tau_1,\tau_2)$-cobordism\/} is a pair consisting of
the~$4$-ball~$D^2\times [0,1]\times[0,1]$ together with a proper oriented smooth~$2$-submanifold~$\Sigma$ whose oriented boundary is given by
\[
\partial\Sigma=(\tau_2 \times \{0\})\cup(\varepsilon'\times\{1\}\times[0,1])\cup((-\tau_1)\times\{1\})\cup((-\varepsilon)\times\{0\}\times [0,1])\,,
\]
as illustrated in Figure~\ref{fig:cobordism}. Note that a~$(\emptyset,\emptyset)$-cobordism is nothing but a closed oriented surface embedded in the~$4$-ball.
Two~$(\tau_1,\tau_2)$-cobordisms~$\Sigma$ and~$\Sigma'$ are {\em isotopic\/} if there exists an isotopy~$h_t$ of~$D^2\times [0,1] \times [0,1]$,
keeping~$\partial(D^2\times[0,1]\times[0,1])$ fixed, such that~$h_1\vert_{\Sigma}\colon \Sigma\simeq \Sigma'$ is an orientation-preserving homeomorphism
and~$h_t(\Sigma)$ is a~$(\tau_1,\tau_2)$-cobordism for all~$t$. We shall denote by~$\Sigma\colon\tau_1\Rightarrow\tau_2$ the isotopy class of
a~$(\tau_1,\tau_2)$-cobordism~$\Sigma$, and by~$\mathit{Id}_\tau$ the isotopy class of the trivial~$(\tau,\tau)$-cobordism~$(D^2\times [0,1],\tau)\times [0,1]$. 

Fix a~$(\tau_1,\tau_2)$-cobordism~$\Sigma$ and a~$(\tau_2,\tau_3)$-cobordism~$\Sigma'$. Their {\em vertical composition\/} is the~$(\tau_1,\tau_3)$-cobordism~$\Sigma_2\star\Sigma_1$
obtained by gluing the two~$4$-balls along the cylinders containing~$\tau_2$, and shrinking the height of the resulting~$4$-ball~$D^2\times[0,1]\times[0,2]$ by a factor~$2$ (see Figure~\ref{fig:vertical}).
Finally, fix~$(\varepsilon,\varepsilon')$-tangles~$\tau_1,\tau_2$ and~$(\varepsilon',\varepsilon'')$-tangles~$\tau_3,\tau_4$.
Given a~$(\tau_1,\tau_2)$-cobordism~$\Sigma_1$ and a~$(\tau_3,\tau_4)$-cobordism~$\Sigma_2$, their {\em horizontal composition\/} is the~$(\tau_3\circ\tau_1,\tau_4\circ\tau_2)$-cobordism~$\Sigma_2\bullet\Sigma_1$ obtained by gluing the two~$4$-balls along the cylinder~$D^2 \times [0,1]$ corresponding to~$\varepsilon'$, and shrinking the length of the resulting~$4$-ball by a factor~$2$ (Figure~\ref{fig:horizontal}).

\begin{figure}[tb]
\labellist\small\hair 2.5pt
\pinlabel {$\tau_1$} at 320 1275
\pinlabel {$\tau_2$} at 375 730
\pinlabel {$\tau_2$} at 500 585
\pinlabel {$\tau_3$} at 275 -30
\pinlabel {$\Sigma_1$} at 300 1010
\pinlabel {$\Sigma_2$} at 335 360
\pinlabel {$\star$} at 295 655
\pinlabel {$=$} at 740 760
\pinlabel {$\tau_1$} at 1260 955
\pinlabel {$\tau_3$} at 1150 345
\pinlabel {$\Sigma_2\star\Sigma_1$} at 1230 700
\endlabellist
\centering
\includegraphics[width=0.5\textwidth]{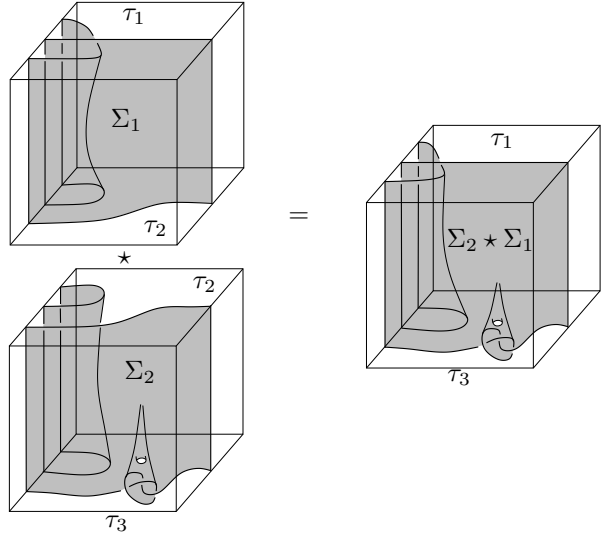}
\caption{The vertical composition of a~$(\tau_1,\tau_2)$-cobordism~$\Sigma_1$ and a~$(\tau_2,\tau_3)$-cobordism~$\Sigma_2$, the~$(\tau_1,\tau_3)$-cobordism~$\Sigma_2\star\Sigma_1$.}
\label{fig:vertical}
\end{figure}

\begin{figure}[tb]
\labellist\small\hair 2.5pt
\pinlabel {$\tau_1$} at 335 580
\pinlabel {$\tau_2$} at 353 45
\pinlabel {$\tau_3$} at 935 610
\pinlabel {$\tau_4$} at 825 45
\pinlabel {$\Sigma_1$} at 312 336
\pinlabel {$\Sigma_2$} at 900 275
\pinlabel {$\bullet$} at 625 300
\pinlabel {$=$} at 1380 300
\pinlabel {$\Sigma_2\bullet\Sigma_1$} at 2250 300
\pinlabel {$\tau_3\circ\tau_1$} at 1845 660
\pinlabel {$\tau_4\circ\tau_2$} at 1760 45
\endlabellist
\centering
\includegraphics[width=0.7\textwidth]{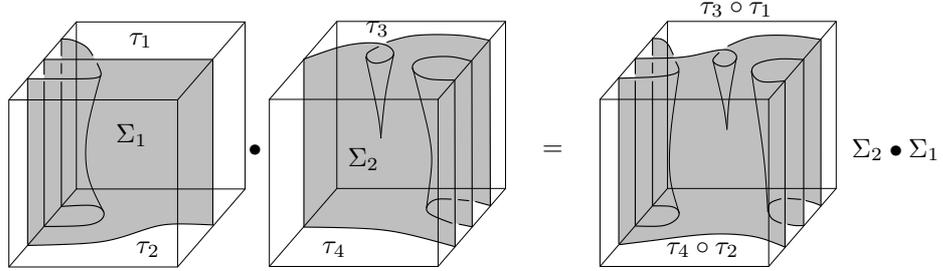}
\caption{The horizontal composition of a~$(\tau_1,\tau_2)$-cobordism~$\Sigma_1$ and a~$(\tau_3,\tau_4)$-cobordism~$\Sigma_2$,
the~$(\tau_3\circ\tau_1,\tau_4\circ\tau_2)$-cobordism~$\Sigma_2\bullet\Sigma_1$.}
\label{fig:horizontal}
\end{figure}

The {\em bicategory of oriented tangles\/} can now be defined as follows: the objects are the finite sequences of~$\pm 1$'s,
the~$1$-morphisms are given by the tangles, and the~$2$-morphisms are given by isotopy classes of cobordisms as described above. Finally, 
the associativity and identity isomorphisms
\[
a\colon(\tau_3\circ\tau_2)\circ\tau_1\Rightarrow\tau_3\circ(\tau_2\circ\tau_1),\quad \ell_{\tau}\colon I_{\varepsilon'}\circ\tau\Rightarrow\tau,
\quad r_\tau\colon\tau\circ I_{\varepsilon}\Rightarrow\tau
\]
are given by the trace of the obvious isotopies. It is a routine check to verify that all the axioms of a bicategory are satisfied.

%%%%%

\subsection{The weak~$2$-functor}
\label{sub:2mor}

We are now ready to define our weak~$2$-functor from the bicategory of oriented tangles to the bicategory of Lagrangian cospans.
Recall from subsection~\ref{sub:2-cat} that we must associate a Hermitian~$\Lambda$-module~$\mathcal{B}(\varepsilon)$ to each object~$\varepsilon$,
a cospan~$\mathcal{B}(\tau)$ to each tangle~$\tau$ and an isomorphism class of~$2$-cospans to each cobordism~$\Sigma$. Additionally, for each~$\varepsilon$, we must define an
invertible~$2$-morphism~$\varphi_\varepsilon\colon I_{\mathcal{B}(\varepsilon)} \Rightarrow \mathcal{B}(I_\varepsilon)$ and for each pair~$\tau_1, \tau_2$ of
composable tangles,
an invertible~$2$-morphism~$\varphi_{\tau_1\tau_2} \colon \mathcal{B}(\tau_2) \circ \mathcal{B}(\tau_1) \Rightarrow \mathcal{B}(\tau_2\circ\tau_1)$.

Let us associate to each object~$\varepsilon$ the Hermitian~$\Lambda$-module~$\mathcal{B}(\varepsilon)=(H_1(\widehat{D}_\varepsilon),\omega_\varepsilon)$
and to each~$(\varepsilon,\varepsilon')$-tangle~$\tau$ the Lagrangian cospan~$\mathcal{B}(\tau)$ given
by~$H_1(\widehat{D}_\varepsilon) \stackrel{i_\tau}\longrightarrow H_1(\widehat{X}_\tau) \stackrel{i_\tau'}\longleftarrow H_1(\widehat{D}_{\varepsilon'})$.
(Note that we slightly abuse notations here, as~$\mathcal{B}(\tau)$ now no longer stands for the isomorphism class of this cospan, but for the cospan itself.)
As for~$2$-morphisms, we proceed as follows.

Fix two~$(\varepsilon,\varepsilon')$-tangles~$\tau_1,\tau_2$. Given a~$(\tau_1,\tau_2)$-cobordism~$\Sigma \subset D^2 \times [0,1] \times [0,1]$, denote by~$\mathcal{N}(\Sigma)$ an open tubular neighborhood of~$\Sigma$ in~$D^2 \times [0,1] \times [0,1]$. We shall orient the exterior
\[
W_\Sigma = \left\{\begin{array}{lr}
        (D^2 \times [0,1] \times [0,1]) \setminus \mathcal{N}(\Sigma) & \text{if } \ell_\varepsilon\neq 0\\
        (S^2 \times [0,1] \times [0,1]) \setminus \mathcal{N}(\Sigma) & \text{if } \ell_\varepsilon=0
        \end{array}\right.
\]
of~$\Sigma$ so that the induced orientation on~$\partial W_\Sigma$ extends the orientation on the space~$(-X_{\tau_1}) \sqcup X_{\tau_2}$. Clearly,~$H_1(W_\Sigma)$ is generated by the
(oriented) meridians of the connected components of~$\Sigma$. The homomorphism~$H_1(W_\Sigma)\rightarrow \mathbb{Z}$ obtained by mapping these meridians to~$1$ extends the previously defined homomorphisms~$H_1(X_{\tau_1}) \rightarrow \mathbb{Z}$ and~$H_1(X_{\tau_2})\rightarrow \mathbb{Z}$. It determines an infinite cyclic covering~$\widehat{W}_\Sigma \rightarrow W_\Sigma$ whose homology is  endowed with a structure of module over~$\Lambda$.

Denote by~$H_1(\widehat{D}_\varepsilon) \stackrel{i_1}\longrightarrow H_1(\widehat{X}_{\tau_1}) \stackrel{i_1'}\longleftarrow H_1(\widehat{D}_{\varepsilon'})$ and~$H_1(\widehat{D}_\varepsilon) \stackrel{i_2}\longrightarrow H_1(\widehat{X}_{\tau}) \stackrel{i_2'}\longleftarrow H_1(\widehat{D}_{\varepsilon'})$ the Lagrangian cospans arising from~$\tau_1$ and~$\tau_2$, and let~$\alpha_{1}\colon H_1(\widehat{X}_{\tau_1}) \rightarrow H_1(\widehat{W}_\Sigma)$ and~$\alpha_{2}\colon H_1(\widehat{X}_{\tau_2}) \rightarrow H_1(\widehat{W}_\Sigma)$ be the homomorphisms induced by the inclusions of~$\widehat{X}_{\tau_1}$ and~$\widehat{X}_{\tau_2}$ into~$\widehat{W}_\Sigma$.  Combining all these inclusion induced maps, the following diagram commutes 
\[
\xymatrix@R0.5cm{ 
&  H_1(\widehat{X}_{\tau_1})  \ar[d]^{\alpha_1}  & \\
 H_1(\widehat{D}_{\varepsilon}) \ar[ur]^{i_1} \ar[dr]_{i_2} & H_1(\widehat{W}_\Sigma) & H_1(\widehat{D}_{\varepsilon'})\ar[ul]_{i_1'}\ar[dl]^{i_2'}\,. \\
&  H_1(\widehat{X}_{\tau_2})  \ar[u]_{\alpha_2}  & 
}
\]
Hence,~$H_1(\widehat{X}_{\tau_1}) \stackrel{\alpha_1}\longrightarrow H_1(\widehat{W}_\Sigma) \stackrel{\alpha_2}\longleftarrow H_1(\widehat{X}_{\tau_2})$ is a 2-cospan,
whose isomorphism class we denote by~$\mathcal{B}(\Sigma)\colon\mathcal{B}(\tau_1)\Rightarrow\mathcal{B}(\tau_2)$. 

Given any object~$\varepsilon$, let~$\alpha_\varepsilon\colon H_1(\widehat{D}_{\varepsilon})\to H_1(\widehat{X}_{I_\varepsilon})$ denote the isomorphism of~$\Lambda$-modules
induced by the inclusion of~$D_\varepsilon$ in~$D_\varepsilon\times[0,1]=X_{I_\varepsilon}$. This isomorphism fits in the commutative diagram
\[
\xymatrix@R0.5cm{ 
&  H_1(\widehat{D}_{\varepsilon})  \ar[d]^{\alpha_\varepsilon}  & \\
 H_1(\widehat{D}_{\varepsilon}) \ar[ur]^{\id} \ar[dr]_{\alpha_\varepsilon} & H_1(\widehat{X}_{I_\varepsilon}) & H_1(\widehat{D}_{\varepsilon})\ar[ul]_\id\ar[dl]^{\alpha_\varepsilon} \,. \\
&  H_1(\widehat{X}_{I_\varepsilon})  \ar[u]_\id & 
}
\]
By Remark~\ref{rem:vert}, the~$2$-morphism~$\varphi_\varepsilon\colon I_{\mathcal{B}(\varepsilon)}\Rightarrow\mathcal{B}(I_\varepsilon)$ defined by this diagram is
invertible, as required in the definition of a weak~$2$-functor.

Given an~$(\varepsilon,\varepsilon')$-tangle~$\tau_1$ and an~$(\varepsilon',\varepsilon'')$-tangle~$\tau_2$, the first part of the proof of Theorem~\ref{thm:functor} actually
shows that there is a canonical isomorphism~$\alpha_{\tau_1\tau_2}\colon H_1(\widehat{X}_{\tau_2})\circ H_1(\widehat{X}_{\tau_1})\to H_1(\widehat{X}_{\tau_2\circ\tau_1})$ which
fits in the commutative diagram
\begin{equation}
\label{eq:DefPhiTau}
\xymatrix@R0.5cm{ 
& & H_1(\widehat{X}_{\tau_2})\circ H_1(\widehat{X}_{\tau_1}) \ar[d]^{\alpha_{\tau_1\tau_2}} & & \\
H_1(\widehat{D}_{\varepsilon}) \ar@/^1pc/[urr]^{j_1i_1} \ar@/_1pc/[drr]_{k_1i_1} \ar[r]^{i_1} & H_1(\widehat{X}_{\tau_1}) \ar[ur]_{j_1} \ar[dr]^{k_1} &
H_1(\widehat{X}_{\tau_2\circ\tau_1}) &
H_1(\widehat{X}_{\tau_2}) \ar[ul]^{j_2} \ar[dl]_{k_2} & H_1(\widehat{D}_{\varepsilon''})\ar@/_1pc/[ull]_{j_2i_2''}\ar@/^1pc/[dll]^{k_2i_2''}\ar[l]_{i_2''}\,, \\
& & H_1(\widehat{X}_{\tau_2\circ\tau_1})  \ar[u]_\id & &
}
\end{equation}
where we follow the notations of the aforementioned proof. Hence, this defines a
canonical~$2$-morphism~$\varphi_{\tau_1\tau_2}\colon\mathcal{B}(\tau_1)\circ\mathcal{B}(\tau_2)\Rightarrow\mathcal{B}(\tau_2\circ\tau_1)$, which is invertible by Remark~\ref{rem:vert}.

\begin{theorem}
\label{thm:2functor}
$\mathcal{B}$ together with the isomorphisms~$\varphi_\varepsilon$ and~$\varphi_{\tau_1\tau_2}$ gives rise to a weak~$2$-functor from the bicategory of oriented tangles to the bicategory of Lagrangian cospans, whose restriction to oriented surfaces is given by the Alexander module.
\end{theorem}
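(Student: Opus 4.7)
The plan is to verify the remaining axioms of a weak $2$-functor in three stages, then unpack the claim about closed surfaces.

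\textbf{Functoriality on $2$-morphisms (per hom-category).} I would first check that $\mathcal{B}$ sends isotopic cobordisms to equal isomorphism classes of $2$-cospans: an ambient isotopy of cobordisms restricts to a homeomorphism of exteriors commuting with the boundary inclusions, which lifts to the infinite cyclic covers and yields an isomorphism of $2$-cospans. Preservation of the vertical composition for $\Sigma_1\colon\tau_1\Rightarrow\tau_2$ and $\Sigma_2\colon\tau_2\Rightarrow\tau_3$ would follow from the decomposition $W_{\Sigma_2\star\Sigma_1}=W_{\Sigma_1}\cup W_{\Sigma_2}$ glued along $X_{\tau_2}$: the associated Mayer-Vietoris sequence on the infinite cyclic covers, combined with Lemma~\ref{lemma:pushout}, identifies $H_1(\widehat{W}_{\Sigma_2\star\Sigma_1})$ with the pushout defining $\mathcal{B}(\Sigma_2)\star\mathcal{B}(\Sigma_1)$, exactly as in the first half of the proof of Theorem~\ref{thm:functor}. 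Preservation of identity $2$-morphisms is immediate from the deformation retract of $W_{\mathit{Id}_\tau}$ onto $X_\tau$, which turns the $2$-cospan associated to $\mathit{Id}_\tau$ into the identity $2$-cospan on $\mathcal{B}(\tau)$.

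\textbf{Naturality and coherence of $\varphi_\varepsilon$ and $\varphi_{\tau_1\tau_2}$.} The isomorphisms $\varphi_\varepsilon$ are canonical, so the only substantial naturality statement concerns $\varphi_{\tau_1\tau_2}$: for horizontally composable cobordisms $\Sigma_1\colon\tau_1\Rightarrow\tau_1'$ and $\Sigma_2\colon\tau_2\Rightarrow\tau_2'$, one must show
\[
\varphi_{\tau_1'\tau_2'}\star\bigl(\mathcal{B}(\Sigma_2)\bullet\mathcal{B}(\Sigma_1)\bigr)=\mathcal{B}(\Sigma_2\bullet\Sigma_1)\star\varphi_{\tau_1\tau_2}.
\]
Both sides are isomorphism classes of $2$-cospans whose middle module is a pushout built from $H_1(\widehat{W}_{\Sigma_1})$, $H_1(\widehat{W}_{\Sigma_2})$ and the appropriate $H_1(\widehat{X}_{\tau_2\circ\tau_1})$ or $H_1(\widehat{X}_{\tau_2'\circ\tau_1'})$. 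The decomposition $W_{\Sigma_2\bullet\Sigma_1}=W_{\Sigma_1}\cup W_{\Sigma_2}$ along $D_{\varepsilon'}\times[0,1]$ together with its Mayer-Vietoris sequence gives a canonical isomorphism between these pushouts, yielding the identity. The associativity coherence axiom reduces similarly, via Mayer-Vietoris applied to the triple decomposition $\widehat{X}_{\tau_3\circ\tau_2\circ\tau_1}=\widehat{X}_{\tau_1}\cup\widehat{X}_{\tau_2}\cup\widehat{X}_{\tau_3}$, to the fact that iterated pushouts of $\Lambda$-modules are canonically associative; the identity coherence axiom follows from the same deformation retract used to define $\varphi_\varepsilon$.

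\textbf{Closed surfaces and the Alexander module.} A $2$-endomorphism of the identity $1$-morphism of the empty sequence is a closed oriented surface $\Sigma$ properly embedded in $D^2\times[0,1]\times[0,1]$, that is, in the $4$-ball. By construction $\mathcal{B}(\Sigma)$ is the isomorphism class of the $2$-cospan $0\to H_1(\widehat{W}_\Sigma)\leftarrow 0$, and is therefore determined by the $\Lambda$-module $H_1(\widehat{W}_\Sigma)$. A Mayer-Vietoris computation, analogous to the one ending the proof of Theorem~\ref{thm:functor}, shows that this module is independent of whether $\Sigma$ is viewed in $D^4$ or in $S^4$, and hence coincides with the classical Alexander module of the $2$-link $\Sigma$.

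The main obstacle should be the bookkeeping for the naturality and coherence diagrams: none of the individual verifications is conceptually deep, but tracking the various inclusion-induced maps through successive pushouts and through the explicit formulas of Subsection~\ref{sub:2-cospan} will require some care.
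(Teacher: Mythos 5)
Your proposal follows essentially the same route as the paper's proof: well-definedness on $2$-morphisms and preservation of vertical composition via Mayer--Vietoris along $X_{\tau_2}$ combined with Lemma~\ref{lemma:pushout}, naturality of $\varphi_{\tau_1\tau_2}$ via the decomposition of the exterior of $\Sigma_2\bullet\Sigma_1$ along $D_{\varepsilon'}\times[0,1]$ identifying $H_1(\widehat{W}_{\Sigma_2})\bullet H_1(\widehat{W}_{\Sigma_1})$ with $H_1(\widehat{W}_{\Sigma_2\bullet\Sigma_1})$, and the closed-surface case reducing to the Alexander module. The only place the paper invests noticeably more detail is the identity coherence axiom (explicitly computing $I_{\mathcal{B}(\tau)}\bullet\varphi_\varepsilon$ and checking, via the universal property and functoriality of homology, that the isotopy-induced isomorphism intertwines the legs), which is precisely the inclusion-induced-map bookkeeping you flag, so no new idea is missing.
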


\begin{proof}
First note that isotopic cobordisms define isomorphic~$2$-cospans, so~$\mathcal{B}$ is well-defined at the level of~$2$-morphisms. Also, for any tangle~$\tau$,~$\mathcal{B}$
clearly maps the trivial concordance~$\mathit{Id}_\tau$ to a~$2$-cospan canonically isomorphic to the identity~$2$-cospan~$\mathit{Id}_{\mathcal{B}(\tau)}$.

Let us now verify that~$\mathcal{B}$ preserves the vertical composition. Fix a~$(\tau_1,\tau_2)$-cobordism~$A$ and a~$(\tau_2,\tau_3)$-cobordism~$B$. 
Let~$H_1(\widehat{X}_{\tau_1}) \stackrel{\alpha_1}\rightarrow H_1(\widehat{W}_A) \stackrel{\alpha_2}\leftarrow H_1(\widehat{X}_{\tau_2})$ and~$H_1(\widehat{X}_{\tau_2}) \stackrel{\beta_2}\rightarrow H_1(\widehat{W}_B) \stackrel{\beta_3}\leftarrow H_1(\widehat{X}_{\tau_3})$ be the 2-cospans arising from~$A$ and~$B$.  We need to show
that~$H_1(\widehat{X}_{\tau_1}) \stackrel{k_A\alpha_1}{\longrightarrow} H_1(\widehat{W}_{B \star A}) \stackrel{k_B\beta_3}\longleftarrow H_1(\widehat{X}_{\tau_3})$ is isomorphic
to the vertical composition~$H_1(\widehat{X}_{\tau_1}) \stackrel{v_A\alpha_1}{\longrightarrow} H_1(\widehat{W}_B) \star H_1(\widehat{W}_A) \stackrel{v_B\beta_3}{\longleftarrow} H_1(\widehat{X}_{\tau_3})$, where~$k_A,k_B$ are the inclusion induced maps and~$v_A,v_B$ are maps resulting from any representative of the
pushout~$H_1(\widehat{W}_B) \star H_1(\widehat{W}_A)$. Observe that~$\widehat{W}_{B \star A}$ decomposes as the union of~$\widehat{W}_B$ and~$\widehat{W}_A$ glued along~$\widehat{X}_{\tau_2}$. Therefore, the associated Mayer-Vietoris exact sequence
\[
\xymatrix@R0.5cm{
H_1(\widehat{X}_{\tau_2})\ar[r]^{\!\!\!\!\!\!\!\!\!\!\!\!\!\!\!\!\!\!\!(-\alpha_2,\beta_2)}&H_1(\widehat{W}_A)\oplus H_1(\widehat{W}_B)\ar[r]^{\;\;\;\;\;\;\;\;\;\;{k_A \choose k_B}}&H_1(\widehat{W}_{B \star A})\ar[r]& 0
}
\]
together with Lemma~\ref{lemma:pushout} imply that~$H_1(\widehat{W}_A) \stackrel{k_A}{\longrightarrow} H_1(\widehat{W}_{B \star A}) \stackrel{k_B}\longleftarrow H_1(\widehat{W}_B)$ is a representative for the pushout~$H_1(\widehat{W}_A) \star H_1(\widehat{W}_B)$. Consequently, these two cospans are canonically isomorphic and the claim follows.

Given tangles and cobordisms as illustrated below
\[
\xymatrix@C+2pc{\varepsilon\rtwocell^{\tau_1}_{\tau_2}{\;\;\;A}&\varepsilon'\rtwocell^{\tau_3}_{\tau_4}{\;\;\;B}&\varepsilon''\,,}
\]
our next goal is to prove the equality
\begin{equation}
\label{eq:naturality}
\varphi_{\tau_2\tau_4}\star(\mathcal{B}(B)\bullet\mathcal{B}(A))=\mathcal{B}(B\bullet A)\star\varphi_{\tau_1\tau_3}
\end{equation}
up to isomorphism of~$2$-cospans. Since the~$2$-morphism~$\varphi_{\tau_1\tau_3}$ is represented by the~$2$-cospan~$H_1(\widehat{X}_{\tau_3}) \circ H_1(\widehat{X}_{\tau_1})\stackrel{\alpha_{\tau_1\tau_3}}{\longrightarrow} H_1(\widehat{X}_{\tau_3\circ\tau_1}) \stackrel{\id}{\longleftarrow}H_1(\widehat{X}_{\tau_3\circ\tau_1})$, Remark~\ref{rem:vert} implies that
the right hand side of equation (\ref{eq:naturality}) is represented by the~$2$-cospan
\[
H_1(\widehat{X}_{\tau_3}) \circ H_1(\widehat{X}_{\tau_1}) \stackrel{k_{31}\alpha_{\tau_1\tau_3}}{\longrightarrow} H_1(\widehat{W}_{B \bullet A}) \stackrel{k_{42}}\longleftarrow H_1(\widehat{X}_{\tau_4\circ\tau_2})\,,
\]
where~$k_{31}$ and~$k_{42}$ are induced by the inclusion maps. A similar argument shows that the left hand side of equation (\ref{eq:naturality}) is represented by the~$2$-cospan
\[
H_1(\widehat{X}_{\tau_3}) \circ H_1(\widehat{X}_{\tau_1}) \stackrel{h_{31}}{\longrightarrow} H_1(\widehat{W}_B) \bullet H_1(\widehat{W}_A) \stackrel{h_{42}\alpha^{-1}_{\tau_2\tau_4}}{\longleftarrow} H_1(\widehat{X}_{\tau_4\circ\tau_2})\,,
\]
where this time, the maps~$h_{31}$ and~$h_{42}$ are the ones which arise from the definition of horizontal composition. It now remains to find an isomorphism~$f$ of~$\Lambda$-modules which fits in the following commutative diagram:
\[
\xymatrix@R0.5cm{ 
& H_1(\widehat{X}_{\tau_3}) \circ H_1(\widehat{X}_{\tau_1})  \ar[dl]_{h_{31}} \ar[dr]^{k_{31}\alpha_{\tau_1\tau_3}} & \\
H_1(\widehat{W}_B) \bullet H_1(\widehat{W}_A) \ar[rr]^f & & \;H_1(\widehat{W}_{B\bullet A})\,.  \\
& H_1(\widehat{X}_{\tau_4\circ\tau_2}) \ar[lu]^{h_{42}\alpha_{\tau_2\tau_4}^{-1}}\ar[ur]_{k_{42}}  & 
}
\]
In order to construct~$f$, first observe that the following diagram commutes
\[
\xymatrix@R0.5cm{
H_1(\widehat{X}_{\tau_2})\ar[d]_{\alpha_2} & H_1(\widehat{D}_{\varepsilon'}) \ar[d]^{\cong}\ar[r]^{i_3'}\ar[l]_{i_2'}&  H_1(\widehat{X}_{\tau_3})\ar[d]^{\beta_3}   \\
H_1(\widehat{W}_A) & H_1(\widehat{D}_{\varepsilon'} \times [0,1])\ar[l]\ar[r]   & H_1(\widehat{W}_B),
}
\]
where all the maps are induced by inclusions. Hence, identifying~$H_1(\widehat{D}_{\varepsilon'}\times [0,1])$ with~$H_1(\widehat{D}_{\varepsilon'})$, the first map in the Mayer-Vietoris exact sequence
\[
\xymatrix@R0.5cm{
H_1(\widehat{D}_{\varepsilon'})\ar[r]^{\!\!\!\!\!\!\!\!\!\!\!\!\!\!\!\!\!\!\!}&H_1(\widehat{W}_A)\oplus H_1(\widehat{W}_B)\ar[r]^{\;\;\;\;\;\;\;\;\;\;}&H_1(\widehat{W}_{B \bullet A})\ar[r]& 0
}
\]
is given by~$(-\alpha_2i_2',\beta_3i_3')$.  
It now follows from Lemma~\ref{lemma:pushout} that the cospan of inclusion induced maps~$H_1(\widehat{W}_A) \stackrel{k_A}{\to} H_1(\widehat{W}_{B \bullet A}) \stackrel{k_B}{\leftarrow} H_1(\widehat{W}_B)$ is a representative of the pushout~$H_1(\widehat{W}_A) \stackrel{h_A}{\to}H_1(\widehat{W}_B) \bullet H_1(\widehat{W}_A)\stackrel{h_B}{\leftarrow} H_1(\widehat{W}_B)$. Invoking the corresponding universal property, this produces a~$\Lambda$-module isomorphism~$f \colon H_1(\widehat{W}_B) \bullet H_1(\widehat{W}_A) \to H_1(\widehat{W}_{B\bullet A})$ with~$fh_A=k_A$ and~$fh_B=k_B$. Using successively the definition of~$\alpha_{\tau_1\tau_3}$ (see diagram (\ref{eq:DefPhiTau}) for the relevant notations), the commutativity of inclusion induced maps, and the equalities above, one gets
\[
f^{-1}k_{31} \alpha_{\tau_1\tau_3}j_3=f^{-1}k_{31}k_3=f^{-1}k_B\beta_3=h_B\beta_3\,.
\]
The equality~$f^{-1}k_{31} \alpha_{\tau_1\tau_3}j_1=h_A\alpha_1$ is proved similarly. Hence, the universal property of diagram~(\ref{equ:LegsForHori}) implies
that~$h_{31}=f^{-1}k_{31} \alpha_{\tau_1\tau_3}$. The equality~$h_{42}=f^{-1}k_{42} \alpha_{\tau_2\tau_4}$ can be dealt with in the same way, and equation (\ref{eq:naturality}) is proved.

Given an~$(\varepsilon,\varepsilon')$-tangle~$\tau$, we must now show that the~$2$-morphism~$\mathcal{B}(r_\tau) \star \varphi_{I_{\varepsilon}\tau} \star (I_{\mathcal{B}(\tau)} \bullet \varphi_\varepsilon)$ coincides with~$r_{\mathcal{B}(\tau)} \colon \mathcal{B}(\tau) \circ I_{\mathcal{B}(\varepsilon)} \Rightarrow \mathcal{B}(\tau)$. First observe that by Remark~\ref{rem:cospan}, one can choose representatives of the pushouts so that for any cospan~$H \to T \leftarrow H'$, one has~$T \circ I_H=T$. In particular, we only need to prove that, for this choice of pushouts,
\begin{equation}
\label{eq:CoherenceId}
\mathcal{B}(r_\tau)  \star \varphi_{I_{\varepsilon}\tau}  \star (I_{\mathcal{B}(\tau)} \bullet \varphi_\varepsilon) =I_{\mathcal{B}(\tau)}.
\end{equation}
As a first step, using the definition of the horizontal composition and Remark~\ref{rem:cospan}, we deduce that~$I_{\mathcal{B}(\tau)} \bullet \varphi_\varepsilon$ is represented by the~$2$-cospan
\[
H_1(\widehat{X}_\tau) \stackrel{\id}{\longrightarrow} H_1(\widehat{X}_\tau) \stackrel{h}{\longleftarrow} H_1(\widehat{X}_\tau) \circ H_1(\widehat{X}_{I_\varepsilon})\,,
\]
where~$h$ is the unique morphism which fits in the following commutative diagram (recall diagram~(\ref{equ:LegsForHori})):
\[
\xymatrix@R0.5cm{
& & H_1(\widehat{X}_\tau) & & & \\
& & H_1(\widehat{X}_\tau) \circ H_1(\widehat{X}_{I_\varepsilon}) \ar[u]^{h} & & & \\
& H_1(\widehat{D}_\varepsilon) \ar@/^1pc/[uur]^{i} &  & H_1(\widehat{X}_\tau) \ar@/_1pc/[uul]_{\id} &  & \\
& H_1(\widehat{X}_{I_\varepsilon}) \ar[ruu]^{j_1} \ar[u]^{\alpha_\varepsilon^{-1}} & & H_1(\widehat{X}_\tau). \ar[uul]_{j_2} \ar[u]_{\id} \\
 & & H_1(\widehat{D}_\varepsilon) \ar[lu]_{\alpha_\varepsilon}\ar[ru]^{i} & &}
\]
A short computation using Remark~\ref{rem:cospan} then shows that the left hand side of equation~(\ref{eq:CoherenceId}) is represented by the~$2$-cospan
\[
H_1(\widehat{X}_\tau) \stackrel{\alpha_{I_\varepsilon \tau} h^{-1}}{\longrightarrow} H_1(\widehat{X}_{\tau\circ I_\varepsilon}) \stackrel{r^{-1}}{\longleftarrow} H_1(\widehat{X}_\tau)\,,
\]
where~$r\colon H_1(\widehat{X}_{\tau\circ I_\varepsilon})\to H_1(\widehat{X}_\tau)$ is the isomorphism induced by the obvious isotopy from~$\tau\circ I_\varepsilon$ to~$\tau$.
We now claim that~$r$ induces a~$2$-cospan isomorphism from~$I_{\mathcal{B}(\tau)}$ to this cospan. To prove this claim, we only need to show the
equality~$r\alpha_{I_\varepsilon \tau} h^{-1}=\id_{H_1(\widehat{X}_{\tau})}$, i.e. to check that~$r\alpha_{I_\varepsilon \tau}$ satisfies the defining property of~$h$ displayed above. Since~$\alpha_{I_\varepsilon \tau}j_1$ and~$\alpha_{I_\varepsilon \tau}j_2$ are the inclusion induced homomorphisms (recall diagram (\ref{eq:DefPhiTau})), this follows
from the functoriality of homology. The proof of the equality~$\mathcal{B}(\ell_\tau)  \star \varphi_{\tau I_{\varepsilon'}} \star (\varphi_{\varepsilon'} \bullet I_{\mathcal{B}(\tau)})=\ell_{\mathcal{B}(\tau)}$ is dealt with in the same way.

Finally, the axiom involving the associativity isomorphisms is left to the reader: although the proof is tedious, it involves no other ideas than the ones presented up to now. Therefore we have proved that~$\mathcal{B}$ is a weak~$2$-functor and we turn to the last statement of the theorem. If~$\Sigma$ is a~$(\emptyset,\emptyset)$-cobordism, that is, a closed oriented surface in the~$4$-ball, then the associated~$2$-cospan is given by
\[
\xymatrix@R0.5cm{ 
&  0  \ar[d]  & \\
0 \ar[ur] \ar[dr] & H_1(\widehat{W}_\Sigma) & 0 \ar[ul] \ar[dl]\,, \\
&  0  \ar[u]  & 
}
\]
with~$W_\Sigma$ the complement of~$\Sigma$ in the~$4$-ball. This is nothing but the Alexander module of~$\Sigma$.
\end{proof}

%%%%%%%%%%%%%%%%%%%%%%%%

\section{Unreduced and multivariable versions}
\label{sec:versions}

Recall that the representation originally defined by Burau takes the form of a homomorphism~$\overline{\rho}_n\colon B_n\to\mathit{GL}_n(\Lambda)$, which is the direct sum
of a trivial~$1$-dimensional representation with~$\rho_n\colon B_n\to\mathit{GL}_{n-1}(\Lambda)$, the {\em reduced\/} Burau representation. Also, these representations admit
multivariable extensions, the so-called {\em Gassner representations\/} of the pure braid groups. It is therefore natural to ask whether these variations of the Burau representation can also be extended to weak~$2$-functors. This is indeed the case, and is the subject of this slightly informal last section.

More precisely, we start in subsection~\ref{sub:monoidal} by explaining how~$\overline{\rho}$ can be extended to a functor~$\overline{\mathcal{B}}$ on tangles.
This functor is no longer Lagrangian ($\overline{\rho}$ is not unitary) but it is monoidal and behaves well with respect to traces. In subsection~\ref{sub:2-monoidal},
we indicate how to extend it to a weak~$2$-functor. Finally, in subsection~\ref{sub:multi}, we briefly explain how all of these constructions can be extended
to multivariable versions, defined on the category of colored tangles.

%%%%%

\subsection{Extending the unreduced Burau representation to a monoidal functor}
\label{sub:monoidal}

Given an integral domain~$\Lambda$, let~$\mathbf{C}_\Lambda$ denote the category
with finitely generated~$\Lambda$-modules as objects, and isomorphism classes of cospans as morphisms, composed by pushouts. Also, let~$\mathbf{GL}_\Lambda$ denote the groupoid
with the same objects as~$\mathbf{C}_\Lambda$ and~$\Lambda$-isomorphisms as morphisms. As in Section~\ref{sec:Lagr}, one can check that the map assigning to an invertible
cospan~$H\stackrel{i}{\longrightarrow}T\stackrel{i'}{\longleftarrow}H'$ the~$\Lambda$-isomorphism~$i'^{-1}i\colon H\to H'$ defines an equivalence of
categories~$\mathit{core}(\mathbf{C}_\Lambda) \stackrel{\simeq}{\longrightarrow}\mathbf{GL}_\Lambda$.
Note that the direct sum endows these categories with a monoidal structure, with the trivial~$\Lambda$-module~$H=0$ being the identity object. Given an endomorphism
of~$\mathbf{C}_\Lambda$, i.e. a cospan of the form~$H\stackrel{i}{\longrightarrow}T\stackrel{i'}{\longleftarrow}H$, define the {\em trace\/} of~$T$ as the coequalizer
\[
\xymatrix@R0.5cm{
H \ar@/^/[rr]^{i}\ar@/_/[rr]_{i'}& & T \ar[r]^{\!\!\!j}&\mathrm{tr}(T)\,.
}
\]
Viewing~$\mathrm{tr}(T)$ as the isomorphism class of the cospan~$0\rightarrow\mathrm{tr}(T)\leftarrow 0$, the trace actually defines a
map~$\mathrm{tr}\colon\mathit{End}(H) \rightarrow \mathit{End}(0)$. It is an amusing exercise to check that it satisfies the following properties, as it should
(see e.g.~\cite[p.~22]{Turaev}).
\begin{romanlist}
\item{If~$T_1$ is a cospan from~$H$ to~$H'$ and~$T_2$ from~$H'$ to~$H$, then~$\mathrm{tr}(T_1\circ T_2)=\mathrm{tr}(T_2 \circ T_1)$.}
\item{If~$T_1$ and~$T_2$ are two endomorphisms, then~$\mathrm{tr}(T_1\oplus T_2)=\mathrm{tr}(T_1)\circ\mathrm{tr}(T_2)$.}
\item{If~$T$ is an endomorphism of~$0$, then~$\mathrm{tr}(T)=T$.}
\end{romanlist}

These additional structures are also present in the category of tangles. Indeed, the juxtaposition endows~$\mathbf{Tangles}$ with a monoidal structure,
with the empty set~$\varepsilon=\emptyset$ being the identity object. Furthermore, the closure of a tangle defines a natural trace
function~$\mathit{End}(\varepsilon)\to\mathit{End}(\emptyset)$. In this context, the unreduced Burau representation can be understood as a monoidal functor~$\overline{\rho}\colon\mathbf{Braids}\to\mathbf{GL}_\Lambda$, where~$\Lambda=\Z[t^{\pm 1}]$.

We now sketch the construction of a monoidal functor~$\overline{\mathcal{B}}\colon\mathbf{Tangles}\to\mathbf{C}_\Lambda$ extending~$\overline{\rho}$, and behaving well with respect
to traces. We shall follow the notation of Section~\ref{sec:2functor}, apart from the fact that all exteriors will be considered in the unit disc~$D^2$, and not the sphere~$S^2$
even when~$\ell_\varepsilon$ vanishes. Let~$x_0$ be the point~$(-1,0)$ in~$D^2$.
For any sequence~$\varepsilon$ of~$\pm 1$'s, set~$\overline{\mathcal{B}}(\varepsilon)=H_1(\widehat{D}_\varepsilon,\widehat{x_0})$ and for any isotopy class~$\tau$ of tangles, let~$\overline{\mathcal{B}}(\tau)$ denote the isomorphism class of the cospan~$H_1(\widehat{D}_\varepsilon,\widehat{x_0}) \stackrel{i_\tau}\longrightarrow H_1(\widehat{X}_\tau,\widehat{x_0\times I})\stackrel{i_\tau'}\longleftarrow H_1(\widehat{D}_{\varepsilon'},\widehat{x_0})$, where~$\widehat{Y}$ stands for the inverse image of a subspace~$Y\subset X_\tau$
by the infinite cyclic covering map~$\widehat{X_\tau}\to X_\tau$. Following almost {\em verbatim\/} the proof of Theorem~\ref{thm:functor}, one checks that this defines
a functor~$\overline{\mathcal{B}}\colon\mathbf{Tangles}\to\mathbf{C}_\Lambda$ which fits in the commutative diagram 
\[
\xymatrix@R0.5cm{ 
& \mathbf{Braids} \ar[d] \ar[r] \ar@/_1pc/[ld]_{\overline{\rho}} & \mathbf{Tangles} \ar[d]^{\overline{\mathcal{B}}} \\
\mathbf{GL}_\Lambda &\mathit{core}(\mathbf{C}_\Lambda) \ar[l]_\simeq\ar[r] & \mathbf{C}_\Lambda\,.}
\]
Furthermore, an additional application of Mayer-Vietoris shows that this functor is monoidal. (The basepoint~$x_0$ is chosen so that the juxtaposition of tangles can be realized
in a natural way by gluing discs along intervals, with~$x_0$ a common endpoint of these intervals.)
Finally, if~$\tau$ is an~$(\varepsilon,\varepsilon)$-tangle, then~$\mathrm{tr}(\overline{\mathcal{B}}(\tau))$ is nothing but the relative Alexander module of the oriented link
in~$D^2\times I$ (or equivalently, in~$S^3$) obtained by the closure of~$\tau$.

%%%%%

\subsection{$\overline{\mathcal{B}}$ as a monoidal weak~$2$-functor}
\label{sub:2-monoidal}

One can modify~$\mathbf{C}_\Lambda$ to obtain a bicategory in the exact same way as we did for~$\mathbf{L}_\Lambda$,
with~$2$-morphisms given by isomorphism classes of~$2$-cospans (recall subsection~\ref{sub:2-cospan}). Furthermore, the direct sum endows this bicategory
with a monoidal structure.

Also, the juxtaposition endows the bicategory of tangles with a monoidal structure. Here again, some care is needed, as different conventions such as the ones in~\cite{KT}
and~\cite{BL1} will lead to different monoidal~bicategories. We will not go into these details, but only mention that our construction is robust enough to be valid
in these different settings.

Let us sketch how the functor~$\overline{\mathcal{B}}$ can be extended to a weak~$2$-functor, following the notation of subsection~\ref{sub:2mor}.
Given a~$(\tau_1,\tau_2)$-cobordism~$\Sigma$, let us denote by~$\overline{\mathcal{B}}(\Sigma)\colon\overline{\mathcal{B}}(\tau_1)\Rightarrow\overline{\mathcal{B}}(\tau_2)$ the
isomorphism class of the~$2$-cospan
\[
H_1(\widehat{X}_{\tau_1},\widehat{x_0\times I}) \stackrel{\alpha_1}\longrightarrow H_1(\widehat{W}_\Sigma,\widehat{x_0\times I\times I}) \stackrel{\alpha_2}\longleftarrow H_1(\widehat{X}_{\tau_2},\widehat{x_0\times I})\,.
\]
One can check that this defines a weak~$2$-functor, that is monoidal in a sense that, once again, we shall not discuss in detail here.

%%%%%

\subsection{Multivariable versions}
\label{sub:multi}

Let~$\mu$ be a positive integer. Recall that a~{\em~$\mu$-colored tangle\/} consists in an oriented tangle~$\tau$ together with a surjective map assigning to each
component of~$\tau$ an integer in~$\{1,\dots,\mu\}$. As explained in~\cite[Section~6.1]{CT},~$\mu$-colored tangles naturally form a category~$\mathbf{Tangles}_\mu$, with the~$\mu=1$ case being nothing but~$\mathbf{Tangles}$. Obviously, assigning a color to the cobordisms and proceeding as in subsection~\ref{sub:2mor},
one obtains a bicategory of~$\mu$-colored tangles.

All the results of the present paper extend to this multivariable setting in a straightforward way, that we now very briefly summarize. The coloring of
points, tangles and cobordisms induces homomorphisms from the homology of the corresponding exterior onto~$\Z^\mu$, thus defining free abelian covers
whose homology is a module over the ring of multivariable Laurent polynomials~$\Z[\Z^\mu]=\Z[t_1^{\pm 1},\dots,t_\mu^{\pm 1}]=:\Lambda_\mu$. This allows one to construct
a weak~$2$-functor from the bicategory of~$\mu$-colored tangles to the bicategory of Lagrangian cospans over~$\Lambda_\mu$, which extends the colored
Gassner representation of~$\mu$-colored braids, and whose restriction to~$\mu$-colored links and surfaces is nothing but the multivariable Alexander module.
The results of subsections~\ref{sub:monoidal} and~\ref{sub:2-monoidal} can be extended in the same way.

\bibliographystyle{plain}
\nocite{*}
\bibliography{bibliographie}

\end{document}